\def\bPsi{\boldsymbol\Psi}
\DeclareMathOperator{\Rea}{Re}
\DeclareMathOperator{\Ima}{Im}
\DeclareMathOperator{\dv}{div}
\begin{document}
\mainmatter
\title{On Error Estimates of the Crank-Nicolson-Polylinear Finite Element Method with the Discrete TBC for
the Generalized Schr\"odinger Equation in an Unbounded Parallelepiped}
\author{Alexander Zlotnik}
\institute{Department of Higher Mathematics at Faculty of Economic Sciences,\\
National Research University Higher School of Economics,\\
Myasnitskaya 20, 101000 Moscow, Russia
}
\maketitle
\begin{abstract}
We deal with an initial-boundary value problem for the generalized time-dependent Schr\"odinger equation with variable coefficients in an unbounded $n$--dimensional parallelepiped ($n\geq 1$).
To solve it, the Crank-Nicolson in time and the polylinear finite element in space method with the discrete transpa\-rent boundary conditions is considered.
We present its stability properties and derive new error estimates $O(\tau^2+|h|^2)$ uniformly in time in $L^2$ space norm, for $n\geq 1$, and mesh $H^1$ space norm, for $1\leq n\leq 3$ (a superconvergence result), under the Sobolev-type assumptions on the initial function.
Such estimates are proved for methods with the discrete TBCs for the first time.

\keywords{time-dependent Schr\"odinger equation,
unbounded domain,
Crank-Nicolson scheme,
finite element method,
discrete transparent boundary conditions,
stability,
error estimates,
superconvergence.}
\end{abstract}

\section{Introduction}
The linear time-dependent Schr\"odinger equation is the key one in many physical fields. It should be often solved in unbounded space domains.
A number of approaches were developed to deal with such problems using approximate transparent boundary conditions (TBCs) at the artificial boundaries.
\par Among the best methods of such kind are those using the so-called discrete TBCs remarkable by the clear mathematical background and the corresponding rigorous stability results in theory as well as the complete absence of spurious reflections in practice.
They first were constructed and studied for the standard Crank-Nicolson in time finite-difference schemes, see \cite{AES03,EA01} and also \cite{DZ06,DZ07}, in the cases of the infinite or semi-infinite axis and strip.
Later families of finite-difference schemes with general space averages were treated in \cite{DZZ09,ZZ11,IZ11}.
In particular, they include the linear and bilinear FEMs in space.
\par In this paper, we consider the Crank-Nicolson-polylinear FEM in an unbounded $n$--dimensional parallelepiped ($n\geq 1$), present results on its stability with respect to the initial data and a free term as well as on exploiting the discrete TBCs and mainly derive the corresponding new error estimates $O(\tau^2+|h|^2)$ uniform in time and in $L^2$ norm (for $n\geq 1$) and mesh $H^1$ norm (for $1\leq n\leq 3$) in space under the Sobolev-type assumptions on the initial function. The latter estimate is a superconvergence result.
Such estimates are proved for the methods with the discrete TBCs for the first time.
Importantly, the error estimates contain no mesh steps in negative powers like for other approximate TBCs, see \cite{JW08,JW10}, that is one more advantage of using the discrete TBCs.

\section{The IBVP and numerical methods to solve it}

We deal with the initial-boundary value problem (IBVP) for the time-dependent generalized Schr\"o\-din\-ger equation with $n\geq 1$ space variables
\begin{gather}
 i\hbar\rho D_t\psi={\mathcal H}\psi:=-\textstyle{\frac{\hbar^{\,2}}{2}}\dv (B\nabla\psi)+V\psi\ \
 \text{on}\ \ \Pi\times (0,T),
\label{eq:se}\\
 \left.\psi\right|_{\partial\Pi}=0,\ \ \psi|_{t=0}=\psi^0(x)\ \ \text{on}\ \ \Pi.
\label{eq:ic}
\end{gather}
Hereafter $\psi=\psi(x,t)$ is the complex-valued unknown wave function,
$i$ is the imaginary unit and $\hbar>0$ is a physical constant.
The $x=(x_1,\ldots,x_n)$-depending coefficients $\rho,V\in L^\infty(\Pi)$ and the $n\times n$ matrix $B\in L^\infty(\Pi)$
are real-valued and satisfy $\rho(x)\geq \underline{\rho}>0$ and $B(x)\geq \underline{B}\,I>0$ on $\Pi$, where $I$ is the unit matrix (whereas $V$ can have any sign in general).
Here $\Pi:=\mathbb{R}$ for $n=1$ or, for $n\geq 2$, $\Pi:=\mathbb{R}\times \Pi_{\widehat{1}}$ is the infinite parallelepiped, with $\Pi_{\widehat{1}}:=(0,X_2)\times\dots\times (0,X_n)$.
\vskip 1 pt
Also $D_t=\frac{\partial}{\partial t}$ and $D_i=\frac{\partial}{\partial x_i}$ are the partial derivatives, and the operators
$\dv$ and $\nabla$ are taken with respect to space variables.
\par We also assume that, for some (sufficiently large) $X_0>0$,
\begin{gather}
 \rho(x)=\rho_{\infty},\ \
 B(x)=\textrm{diag}\,(B_{1\infty},\ldots,B_{n\infty}),\ \
 V(x)=V_{\infty}\ \ \text{for}\ \ |x_1|\geq X_0,
 \label{eq:s21}
\end{gather}
where $\textrm{diag}\,(B_{1\infty},\ldots,B_{n\infty})$ is the diagonal matrix with the listed positive diagonal entries.
More generally, it could be easily assumed that $\rho$, $B$ and $V$ have different constant values for $x_1\leq -X_0$ and for $x_1\geq X_0$.
Let $X_1>X_0$, and $\Omega=\Omega_X=(-X_1,X_1)$ for $n=1$ or $\Omega=\Omega_X=(-X_1,X_1)\times \Pi_{\widehat{1}}$ for $n\geq 2$.
\vskip 1pt
\par We consider the weak solution $\psi\in C([0,T]; H_0^1(\Pi))$ having
$D_t\psi\in C([0,T];\linebreak L^2(\Pi))$ and satisfying the integral identity
\begin{equation}
 i\hbar(D_t\psi(\cdot,t),\varphi)_{L^{2,\rho}(\Pi)}
 =\mathcal{L}_{\Pi}(\psi(\cdot,t),\varphi)\ \ \text{for any}\ \ \varphi\in H_0^1(\Pi),\ \ \text{on}\ \ [0,T],
\label{weak1}
\end{equation}
and the initial condition $\psi|_{t=0}=\psi^0\in H_0^1(\Pi)$.
Hereafter we use the standard complex Lebesgue and Sobolev spaces (and subspaces),
the weighted complex Lebesgue space $L^{2,\rho}(G)$ endowed by the inner product
$(w,\varphi)_{L^{2,\rho}(G)}:=(\rho w,\varphi)_{L^2(G)}$
and the $\mathcal{H}$-related Hermitian-symmetric sesquilinear form
\[
 \mathcal{L}_G(w,\varphi)
 :=\textstyle{\frac{\hbar^{\,2}}{2}}(B\nabla w,\nabla\varphi)_{L^2(G)}+(Vw,\varphi)_{L^2(G)},\ \
 \text{with}\ \ G=\Pi,\Omega,\text{etc.}
\]
\par We define a non-uniform mesh in $x_1$ on $\mathbb{R}$ containing the points $\pm X_1$ and being uniform with a step $0<h_1<X_1$ outside $[-X_1+h_1,X_1-h_1]\supset [-X_0,X_0]$.
We also define non-uniform meshes in $x_2,\dots,x_n$ respectively on $[0,X_2],\ldots,[0,X_n]$ (containing the ends of the segments).
They induce the partition of $\bar{\Pi}$ into finite elements that are rectangular parallelepipeds without common internal points. Let $|h|$ be their maximal diagonal length.
Let $S_h(\bar{\Pi})$ be the (infinite-dimensional) subspace of functions in
$H_0^1(\Pi)$ that are polylinear over each element.
Clearly $S_h(\bar{\Pi})\subset C(\bar{\Pi})\cap L^2(\Pi)$.
Let $S_h$ be the restriction of $S_h(\bar{\Pi})$ to $\bar{\Omega}$.
\par Let $\overline{\omega}^{\,\tau}_M$ be the non-uniform mesh $0=t_0<\ldots<t_M=T$ with steps
$\tau_m:=t_m-t_{m-1}$.
We put $\tau_{\rm max}:=\max_{1\leq m\leq M}\tau_m$ and
$\hat{\tau}_m:=\frac{\tau_m+\tau_{m+1}}{2}$ for $1\leq m\leq M-1$ and $\hat{\tau}_0:=\frac{\tau_1}{2}$.
We define the time mesh operators
\[
 \overline{\partial}_tY^m:= \frac{Y^m-Y^{m-1}}{\tau_m},\ \
 \hat{\partial}_tY^m:= \frac{Y^{m+1}-Y^m}{\hat{\tau}_m},\ \
 \overline{s}_tY^m:= \frac{Y^{m-1}+Y^m}{2}.
\]
\par We introduce \textit{the Crank-Nicolson-polylinear FEM} approximate solution $\Psi$: $\overline{\omega}^{\,\tau}_M\to S_h(\bar{\Pi})$ satisfying the integral identity
\begin{equation}
 i\hbar(\overline{\partial}_t\Psi^m,\varphi)_{L^{2,\rho}(\Pi)}
 =\mathcal{L}_{\Pi}(\overline{s}_t\Psi^m,\varphi)
 \ \ \text{for any}\
 \varphi\in S_h(\bar{\Pi}) \ \text{and}\ 1\leq m\leq M,
\label{p43}
\end{equation}
compare with \eqref{weak1},
and the initial condition $\Psi|_{t=0}=\Psi^0\in S_h(\bar{\Pi})$,
where $\Psi^0$ is an approxi\-ma\-tion for $\psi^0$.
\par Let $\ell_\infty^m(\varphi)$ be a conjugate linear functional on $S_h(\bar{\Pi})$ that we add to the right-hand side of \eqref{p43} to study stability in more detail and to derive error estimates.
\begin{proposition}
Let $\ell_\infty^m(\varphi)=(F^m,\varphi)_{L^2(\Pi)}$ with $F^m\in L^2(\Pi)$ for $1\leq m\leq M$.
Then there exists a unique approximate solution $\Psi$ and the following first stability bound holds
\begin{equation}
 \max_{0\leq m\leq M} \|\Psi^m \|_{L^{2,\rho}(\Pi)}
 \leq \|\Psi^0\|_{L^{2,\rho}(\Pi)}
 +\frac{2}{\hbar}\sum_{m=1}^M \left\|F^m\right\|_{L^{2,1/\rho}(\Pi)}\tau_m.
 \label{p12}
\end{equation}
\label{prop:1Pi}
\end{proposition}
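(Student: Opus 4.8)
The plan is to treat \eqref{p43} (augmented by $\ell_\infty^m$) as a sequence of linear variational problems, one per time level, and to establish \eqref{p12} by a discrete energy argument. At step $m$ the problem reads: find $\Psi^m\in S_h(\bar{\Pi})$ with $A_m(\Psi^m,\varphi)=G_m(\varphi)$ for all $\varphi\in S_h(\bar{\Pi})$, where $A_m(w,\varphi):=i\hbar(w,\varphi)_{L^{2,\rho}(\Pi)}-\tfrac{\tau_m}{2}\mathcal{L}_\Pi(w,\varphi)$ and $G_m$ gathers the known terms involving $\Psi^{m-1}$ together with $\tau_m\ell_\infty^m$. Since $S_h(\bar{\Pi})$ is a closed subspace of $H_0^1(\Pi)$, hence itself a Hilbert space, I would invoke the Lax--Milgram lemma. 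Boundedness of $A_m$ and of $G_m$ in the $H^1(\Pi)$ norm is immediate from $\rho,V\in L^\infty$, $B\in L^\infty$ and $F^m\in L^2(\Pi)$; the substantive point is coercivity.

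For coercivity I would use the real combination $\lambda\,\Ima A_m(w,w)-\Rea A_m(w,w)$, noting that $\Ima A_m(w,w)=\hbar\|w\|^2_{L^{2,\rho}(\Pi)}$ while $-\Rea A_m(w,w)=\tfrac{\tau_m}{2}\mathcal{L}_\Pi(w,w)$. The gradient part is controlled from below via $B\geq\underline{B}\,I$, and the potentially sign-indefinite term $(Vw,w)_{L^2(\Pi)}$ is absorbed into the $L^2$-coercive imaginary part (using $\rho\geq\underline{\rho}$) by choosing $\lambda$ large enough; this yields a full lower bound by $c\|w\|^2_{H^1(\Pi)}$, whence $|A_m(w,w)|\geq c\|w\|^2_{H^1(\Pi)}$. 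Existence and uniqueness then follow. Uniqueness alone is in fact immediate: for a difference $W$ of two solutions, $A_m(W,W)=0$ and, since $\mathcal{L}_\Pi(W,W)$ is real, the imaginary part gives $\hbar\|W\|^2_{L^{2,\rho}(\Pi)}=0$.

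For the stability bound I would set $\varphi=\overline{s}_t\Psi^m\in S_h(\bar{\Pi})$ in the identity. Because $\mathcal{L}_\Pi$ is Hermitian-symmetric, $\mathcal{L}_\Pi(\overline{s}_t\Psi^m,\overline{s}_t\Psi^m)$ is real, so taking imaginary parts eliminates its contribution. Combined with $\Ima\bigl[i\hbar(\overline{\partial}_t\Psi^m,\overline{s}_t\Psi^m)_{L^{2,\rho}(\Pi)}\bigr]=\hbar\,\Rea(\overline{\partial}_t\Psi^m,\overline{s}_t\Psi^m)_{L^{2,\rho}(\Pi)}$ and the algebraic identity $\Rea(\Psi^m-\Psi^{m-1},\Psi^m+\Psi^{m-1})_{L^{2,\rho}(\Pi)}=\|\Psi^m\|^2_{L^{2,\rho}(\Pi)}-\|\Psi^{m-1}\|^2_{L^{2,\rho}(\Pi)}$, this gives the energy identity
\begin{equation*}
 \|\Psi^m\|^2_{L^{2,\rho}(\Pi)}-\|\Psi^{m-1}\|^2_{L^{2,\rho}(\Pi)}
 =\tfrac{2\tau_m}{\hbar}\,\Ima\,\ell_\infty^m(\overline{s}_t\Psi^m).
\end{equation*}
I would then bound the right-hand side by the weighted Cauchy--Schwarz inequality, $|\ell_\infty^m(\overline{s}_t\Psi^m)|\leq\|F^m\|_{L^{2,1/\rho}(\Pi)}\|\overline{s}_t\Psi^m\|_{L^{2,\rho}(\Pi)}\leq\tfrac12\|F^m\|_{L^{2,1/\rho}(\Pi)}\bigl(\|\Psi^m\|_{L^{2,\rho}(\Pi)}+\|\Psi^{m-1}\|_{L^{2,\rho}(\Pi)}\bigr)$. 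Factoring the difference of squares on the left and cancelling the common factor $\|\Psi^m\|_{L^{2,\rho}(\Pi)}+\|\Psi^{m-1}\|_{L^{2,\rho}(\Pi)}$ (the case where it vanishes being trivial) reduces the estimate to $\|\Psi^m\|_{L^{2,\rho}(\Pi)}-\|\Psi^{m-1}\|_{L^{2,\rho}(\Pi)}\leq\tfrac{\tau_m}{\hbar}\|F^m\|_{L^{2,1/\rho}(\Pi)}$, and telescopic summation over the steps yields \eqref{p12} (indeed with the sharper constant $\tfrac1\hbar$).

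The step I expect to be the main obstacle is existence on the infinite-dimensional space $S_h(\bar{\Pi})$ over the unbounded $\Pi$: here uniqueness does not give existence for free, so the coercivity argument must be carried out carefully, and the indefinite potential $V$ forces the use of the $L^2$-coercivity supplied by the imaginary part of $A_m$. The remaining energy computation is then essentially a routine unitarity-type argument.
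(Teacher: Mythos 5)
Your proof is correct. Note that the paper does not prove Proposition~\ref{prop:1Pi} inline: it defers to \cite{DZZ09,ZZ11,IZ11}, where the analogous statements are established by exactly the energy technique of your third paragraph --- test with $\overline{s}_t\Psi^m$, use Hermitian symmetry of $\mathcal{L}_\Pi$ so its contribution drops under $\Ima$, obtain the discrete mass identity, and telescope after weighted Cauchy--Schwarz. So the stability half of your argument is the standard one; your cancellation of the common factor $\|\Psi^m\|_{L^{2,\rho}(\Pi)}+\|\Psi^{m-1}\|_{L^{2,\rho}(\Pi)}$ even yields the sharper constant $\frac{1}{\hbar}$ in place of the stated $\frac{2}{\hbar}$, which is what one gets by instead majorizing $\|\overline{s}_t\Psi^m\|_{L^{2,\rho}(\Pi)}$ by the running maximum. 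Where you genuinely add content is existence: since $S_h(\bar{\Pi})$ is infinite-dimensional (the $x_1$-mesh covers all of $\mathbb{R}$), the finite-dimensional ``uniqueness implies existence'' shortcut is unavailable, and your complex Lax--Milgram argument via $\lambda\,\Ima A_m(w,w)-\Rea A_m(w,w)$ correctly gives $|A_m(w,w)|\geq c\|w\|^2_{H^1(\Pi)}$ despite the indefinite $V$, the imaginary part supplying the $L^2$-coercivity that absorbs $\frac{\tau_m}{2}(Vw,w)_{L^2(\Pi)}$. Two minor points: the coercivity constant degenerates as $\tau_m\to 0$, which is harmless since each level is solved at fixed step; and the application of Lax--Milgram tacitly uses that $S_h(\bar{\Pi})$ is closed in $H_0^1(\Pi)$ --- true because the polylinearity constraint is elementwise and each local polylinear space is finite-dimensional, hence closed under $L^2_{\mathrm{loc}}$ limits --- a line worth stating explicitly.
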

\par We introduce also the ``energy'' norm such that
\begin{gather}
\hspace{-6pt} \|w\|_{{\mathcal H}+\hat{v}\rho;\,\Pi}^2
 :={\mathcal L}_{\Pi}(w,w)+\hat{v}\|w\|^2_{L^{2,\rho}(\Pi)}
 \geq\hat{\delta}\|w\|^2_{L^{2,\rho}(\Pi)}
 \ \text{for any}\ w\in H_0^1(\Pi),
\label{p20inf}
\end{gather}
with some real numbers $\hat{v}$ and $\hat{\delta}>0$.
Inequality \eqref{p20inf} is knowingly valid for $\hat{v}$ so large that $\textstyle{\frac{\hbar^{\,2}}{2}}\underline{B}\lambda_0+V(x)+(\hat{v}-\hat{\delta})\rho(x)\geq 0$ on $\Omega$
with $\lambda_0:=\sum_{k=2}^n\bigl(\frac{\pi}{X_k}\bigr)^2$ (here $\lambda_0=0$ for $n=1$).
We define also the corresponding dual mesh depending norm
\[
 \|w\|_{H_h^{-1}(\Pi)}
 := \max_{\varphi\in S_h(\bar{\Pi}):\,\|\varphi\|_{{\mathcal H}+\hat{v}\rho;\,\Pi}=1}
 |\langle w,\varphi\rangle_{\Pi}|
 \leq c\|w\|_{H^{-1}(\Pi)},
\]
where $\langle w,\varphi\rangle_{\Pi}$ is the conjugate duality relation on
$H^{-1}(\Pi)\times H_0^1(\Pi)$ and $H^{-1}(\Pi)=[H_0^1(\Pi)]^*$.
Hereafter $c$ and $c_1$ are generic constants independent of the meshes, any functions and $T$ whereas $c_0$ denotes absolute constants (fixed numbers).
\begin{proposition}
Let $\ell_\infty^m(\varphi)=\langle F^m,\varphi\rangle_{\Pi}$
with $F^m\in  H^{-1}(\Pi)$ for $1\leq m\leq M$ and $F^0\in H^{-1}(\Pi)$ be arbitrary.
Then there exists a unique approximate solution $\Psi$ and the following second stability bound holds
\begin{gather}
\max_{0\leq m\leq M}\left\|\Psi^m\right\|_{\mathcal{H}+\hat{v}\rho;\,\Pi}
 \leq \left\| \Psi^0 \right\|_{{\mathcal H}+\hat{v}\rho;\,\Pi}
\nonumber\\
+4\sum_{m=1}^M \Bigl(\frac{|\hat{v}|}{\hbar}\,\|F^m\|_{H_h^{-1}(\Pi)}
 + \left\| \overline{\partial}_tF^m\right\|_{H_h^{-1}(\Pi)}\Bigr)\tau_m
 +4\left\| F^0\right\|^{(-1)}_h.
\label{p22}
\end{gather}
\label{prop:2Pi}
\end{proposition}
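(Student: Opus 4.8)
The plan is to run the discrete energy method, testing the scheme \eqref{p43} (with the conjugate-linear functional $\ell_\infty^m$ added to its right-hand side) against two different choices of $\varphi$ and then combining the outcomes. First I would take $\varphi=\overline{s}_t\Psi^m\in S_h(\bar\Pi)$. Using the identity $\Rea(\overline{\partial}_t\Psi^m,\overline{s}_t\Psi^m)_{L^{2,\rho}(\Pi)}=\frac{1}{2\tau_m}\bigl(\|\Psi^m\|^2_{L^{2,\rho}(\Pi)}-\|\Psi^{m-1}\|^2_{L^{2,\rho}(\Pi)}\bigr)$ together with the reality of $\mathcal{L}_\Pi(\overline{s}_t\Psi^m,\overline{s}_t\Psi^m)$, the imaginary part of the identity yields the $L^2$ balance $\frac{\hbar}{2\tau_m}\bigl(\|\Psi^m\|^2_{L^{2,\rho}(\Pi)}-\|\Psi^{m-1}\|^2_{L^{2,\rho}(\Pi)}\bigr)=\Ima\,\ell_\infty^m(\overline{s}_t\Psi^m)$, which already underlies Proposition \ref{prop:1Pi}. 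Next I would take $\varphi=\overline{\partial}_t\Psi^m\in S_h(\bar\Pi)$. Now $(\overline{\partial}_t\Psi^m,\overline{\partial}_t\Psi^m)_{L^{2,\rho}(\Pi)}$ is real, so the left-hand side $i\hbar\|\overline{\partial}_t\Psi^m\|^2_{L^{2,\rho}(\Pi)}$ is purely imaginary; the Hermitian symmetry of $\mathcal{L}_\Pi$ gives $\Rea\,\mathcal{L}_\Pi(\overline{s}_t\Psi^m,\overline{\partial}_t\Psi^m)=\frac{1}{2\tau_m}\bigl(\mathcal{L}_\Pi(\Psi^m,\Psi^m)-\mathcal{L}_\Pi(\Psi^{m-1},\Psi^{m-1})\bigr)$, and the real part of the identity becomes $\frac{1}{2\tau_m}\bigl(\mathcal{L}_\Pi(\Psi^m,\Psi^m)-\mathcal{L}_\Pi(\Psi^{m-1},\Psi^{m-1})\bigr)=-\Rea\,\ell_\infty^m(\overline{\partial}_t\Psi^m)$.

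I would then multiply the $L^2$ balance by $\hat v/\hbar$ and add it to the second balance; since $\mathcal{L}_\Pi(w,w)+\hat v\|w\|^2_{L^{2,\rho}(\Pi)}$ is precisely the energy norm of \eqref{p20inf}, the combination reconstructs it exactly:
\[ \tfrac{1}{2\tau_m}\bigl(\|\Psi^m\|^2_{\mathcal{H}+\hat v\rho;\,\Pi}-\|\Psi^{m-1}\|^2_{\mathcal{H}+\hat v\rho;\,\Pi}\bigr)=\tfrac{\hat v}{\hbar}\Ima\,\ell_\infty^m(\overline{s}_t\Psi^m)-\Rea\,\ell_\infty^m(\overline{\partial}_t\Psi^m). \]
Summing over $m=1,\dots,k$ telescopes the left side to $\|\Psi^k\|^2_{\mathcal{H}+\hat v\rho;\,\Pi}-\|\Psi^0\|^2_{\mathcal{H}+\hat v\rho;\,\Pi}$, so it remains to control the two accumulated data terms on the right.

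The main obstacle is the term $\Rea\,\ell_\infty^m(\overline{\partial}_t\Psi^m)=\Rea\,\langle F^m,\overline{\partial}_t\Psi^m\rangle_\Pi$: the discrete time derivative $\overline{\partial}_t\Psi^m$ is not controlled by the energy norm, and a direct estimate would cost a negative power of $\tau_m$. The remedy is a summation by parts (Abel transformation) in time, which moves the difference onto the data: $\sum_{m=1}^k\tau_m\langle F^m,\overline{\partial}_t\Psi^m\rangle_\Pi=\langle F^k,\Psi^k\rangle_\Pi-\langle F^0,\Psi^0\rangle_\Pi-\sum_{m=1}^k\tau_m\langle\overline{\partial}_t F^m,\Psi^{m-1}\rangle_\Pi$. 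Each resulting pairing is estimated through the dual norm, using $|\langle G,\varphi\rangle_\Pi|\le\|G\|_{H_h^{-1}(\Pi)}\|\varphi\|_{\mathcal{H}+\hat v\rho;\,\Pi}$ for $\varphi\in S_h(\bar\Pi)$, together with $\|\overline{s}_t\Psi^m\|_{\mathcal{H}+\hat v\rho;\,\Pi}\le\frac12(\|\Psi^{m-1}\|_{\mathcal{H}+\hat v\rho;\,\Pi}+\|\Psi^m\|_{\mathcal{H}+\hat v\rho;\,\Pi})$. Writing $F^k=F^0+\sum_{m=1}^k\tau_m\overline{\partial}_t F^m$ absorbs the boundary pairing $\langle F^k,\Psi^k\rangle_\Pi$ into the $F^0$ and $\overline{\partial}_t F^m$ contributions, which (after relaxing a coefficient $2|\hat v|/\hbar$ to the uniform $4|\hat v|/\hbar$) produces exactly the data-dependent quantity $\mathcal{B}$ standing for the entire right-hand side of \eqref{p22} beyond $\|\Psi^0\|_{\mathcal{H}+\hat v\rho;\,\Pi}$.

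Finally, setting $E_k:=\max_{0\le m\le k}\|\Psi^m\|_{\mathcal{H}+\hat v\rho;\,\Pi}$ and collecting the bounds gives $\|\Psi^k\|^2_{\mathcal{H}+\hat v\rho;\,\Pi}\le\|\Psi^0\|^2_{\mathcal{H}+\hat v\rho;\,\Pi}+E_k\,\mathcal{B}$ for every $k$; taking the maximum over $k$ yields the quadratic inequality $E_M^2\le\|\Psi^0\|^2_{\mathcal{H}+\hat v\rho;\,\Pi}+E_M\,\mathcal{B}$, whence $E_M\le\|\Psi^0\|_{\mathcal{H}+\hat v\rho;\,\Pi}+\mathcal{B}$, which is \eqref{p22}. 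Existence and uniqueness are handled as in Proposition \ref{prop:1Pi}: the one-step problem for $\Psi^m$ is $\frac{i\hbar}{\tau_m}(\Psi^m,\varphi)_{L^{2,\rho}(\Pi)}-\frac12\mathcal{L}_\Pi(\Psi^m,\varphi)=(\text{data depending on }\Psi^{m-1},F^m,\varphi)$, and taking $\varphi=\Psi^m$ in its homogeneous version and extracting the imaginary part forces $\|\Psi^m\|_{L^{2,\rho}(\Pi)}=0$, giving uniqueness; solvability then follows from the discrete-TBC reduction to the finite-dimensional problem on $\bar\Omega$, where uniqueness implies existence.
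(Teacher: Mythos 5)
Your stability derivation is correct and, as far as a comparison is possible, it is the intended argument: the paper gives no in-text proof of Proposition \ref{prop:2Pi} but defers it to \cite{DZZ09,ZZ11,IZ11} (and to \cite{ZZ12} for $n=1$), where the bound is obtained by exactly this energy method --- there the two balances are produced at once by the single test function $(i\hbar\overline{\partial}_t+\hat{v}\overline{s}_t)\Psi^m$, whose fingerprint is visible in the second inequality of Lemma \ref{lem1}; your two separate tests $\varphi=\overline{s}_t\Psi^m$ and $\varphi=\overline{\partial}_t\Psi^m$, combined with the weight $\hat{v}/\hbar$, amount to the same computation. All the delicate points are in place: the Abel transformation that moves $\overline{\partial}_t$ from $\Psi$ onto $F$ (this is precisely what keeps $\tau_m^{-1}$ out of the estimate), the splitting $F^k=F^0+\sum_{m=1}^{k}\tau_m\overline{\partial}_tF^m$ of the boundary pairing, the dual-norm estimates (legitimate because $\Psi^m$ and $\overline{s}_t\Psi^m$ lie in $S_h(\bar{\Pi})$, over which the norm $\|\cdot\|_{H_h^{-1}(\Pi)}$ is defined), and the closing quadratic inequality $E_M^2\le\|\Psi^0\|^2_{\mathcal{H}+\hat{v}\rho;\,\Pi}+E_M\mathcal{B}$, which yields \eqref{p22} with exactly the stated constants after relaxing $2|\hat{v}|/\hbar$ to $4|\hat{v}|/\hbar$.

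The one genuine defect is the existence part. The space $S_h(\bar{\Pi})$ is infinite-dimensional (the mesh in $x_1$ covers all of $\mathbb{R}$), so uniqueness of the one-step problem does not imply its solvability, and your fallback through the discrete-TBC reduction is not available here: Proposition \ref{Zlotnik_mini4:th:pk} \emph{presupposes} a solution on all of $\Pi$ (the restriction is derived from it, not conversely) and is stated under hypotheses --- $\Psi^0\in S_{0h}$, uniform time mesh, no free term --- that Proposition \ref{prop:2Pi} does not assume; moreover a general $F^m\in H^{-1}(\Pi)$ is not localized, so no finite-dimensional restriction of the one-step problem even exists. The repair is short. The one-step sesquilinear form $a_m(w,\varphi):=\frac{i\hbar}{\tau_m}(w,\varphi)_{L^{2,\rho}(\Pi)}-\frac12\mathcal{L}_{\Pi}(w,\varphi)$ is bounded on $H_0^1(\Pi)$, and with $t:=\|\varphi\|^2_{L^{2,\rho}(\Pi)}$, $s:=\|\varphi\|^2_{\mathcal{H}+\hat{v}\rho;\,\Pi}$ one has $a_m(\varphi,\varphi)=\bigl(\frac{\hat{v}}{2}+\frac{i\hbar}{\tau_m}\bigr)t-\frac{s}{2}$; for $\hat{v}\le 0$ this gives $|a_m(\varphi,\varphi)|\ge|\Rea a_m(\varphi,\varphi)|\ge\frac{s}{2}$, while for $\hat{v}>0$ the case distinction $\hat{v}t\le\frac{s}{2}$ versus $\hat{v}t>\frac{s}{2}$ gives $|a_m(\varphi,\varphi)|\ge\min\bigl(\frac14,\frac{\hbar}{2\hat{v}\tau_m}\bigr)s$. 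Since \eqref{p20inf} together with $B,V,\rho\in L^\infty(\Pi)$ makes $\|\cdot\|_{\mathcal{H}+\hat{v}\rho;\,\Pi}$ equivalent to $\|\cdot\|_{H^1(\Pi)}$ on $H_0^1(\Pi)$, the complex Lax--Milgram lemma applied in the closed subspace $S_h(\bar{\Pi})\subset H_0^1(\Pi)$, with the bounded conjugate-linear right-hand side built from $\Psi^{m-1}$ and $\langle F^m,\cdot\rangle_{\Pi}$, yields a unique $\Psi^m$ at each level, and induction in $m$ finishes the existence claim. With this substitution your proof is complete.
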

\par Method \eqref{p43} cannot be directly used in practice because of the infinite number of unknowns at each time level.
Nevertheless it is possible to restrict the method to $\bar{\Omega}$ provided that $\Psi^0\in S_{0h}:=\{\varphi\in S_h;\,\varphi(x)=0$ on $\Omega\setminus\Omega_0\}$, where $\Omega_0:=\Omega_{X_1-h_1,X_2,\ldots,X_n}$,
and $\overline{\omega}^{\,\tau}_M$ is uniform with the step $\tau=\frac{T}{M}$.
Let both assumptions be valid up to the end of the section.
\par By definition, \textit{the discrete TBCs} are conditions at the artificial boundaries $x_1=\pm X_1$ allowing to accomplish the restriction (they are non-local in $x_2,\ldots,x_n$ and $t$).
To write down them explicitly, for clarity, we confine ourselves by the case of the uniform mesh in $x_k$ with the step
$h_k=\frac{X_k}{J_k}$, for $2\leq k\leq n$, and define the related well-known direct and inverse discrete sine Fourier transforms
\begin{gather*}
 P^{(q)}=\left({\mathcal F}_kP\right)^{(q)}:= \frac{2}{J_k} \sum_{j=1}^{J_k-1} P_j \sin \frac{\pi q j}{J_k},\ \ 1\leq q\leq J_k-1,
\\
 P_j=\bigl({\mathcal F}_k^{-1}P^{(\cdot)}\bigr)_j:= \sum_{q=1}^{J_k-1} P^{(q)} \sin \frac{\pi q j}{J_k},\ \ 1\leq j\leq J_k-1.
\end{gather*}
The related eigenvalues of the 1D linear in $x_k$ FEM counterparts of the operators $-D_k^2$ and the unit one (for zero Dirichlet boundary values at $x_k=0,X_k$) are
\[
 \lambda_q^{(k)}=\Bigl( \frac{2}{h_k}\sin \frac{\pi q h_k}{2X_k}\Bigr)^2,\ \
 \sigma_q^{(k)}=1-\frac{2}{3}\sin^2 \frac{\pi q h_k}{2X_k}\in \Bigl(\frac13,1\Bigr).
\]
Denote by $\omega_{h\widehat{1}}$ the internal part of the introduced uniform mesh in $\bar{\Pi}_{\widehat{1}}$ and define the related mesh inner product
\begin{gather*}
 (U,W)_{\omega_{h\widehat{1}}}
 :=\sum_{j_2=1}^{J_2-1}\ldots\sum_{j_n=1}^{J_n-1}
 U_{j_2,\dots,j_n}W^*_{j_2,\ldots,j_n}h_2\dots h_n\ \ \text{for}\ \ n\geq 2
\end{gather*}
or set $(U,W)_{\omega_{h\widehat{1}}}:=UW^*$ for $n=1$,
where $W^*$ is the complex conjugate for $W$.
\vskip 1pt
\par Recall that the discrete convolution of mesh functions $R,Q$: $\overline{\omega}_M^{\,\tau}\to {\mathbb C}$ is given by
$(R*Q)^m:=\sum_{p=0}^m R^pQ^{m-p}$ for $0\leq m\leq M$.
\begin{proposition}
\label{Zlotnik_mini4:th:pk}
The restriction $\Psi|_{\bar{\Omega}}$ of the above approximate solution
obeys the integral identity on $\Omega$
\begin{gather}
 i\hbar(\overline{\partial}_t\Psi^m,\varphi)_{L^{2,\rho}(\Omega)}
 =\mathcal{L}_{\Omega}(\overline{s}_t\Psi^m,\varphi)
\nonumber\\[1mm]
 -\textstyle{\frac{\hbar^{\,2}}{2}}B_{1\infty}(\mathcal{S}_{\rm ref}^{m}\bPsi_{X_1}^m,\varphi|_{x_1=X_1})_{\omega_{h\widehat{1}}}
 +\textstyle{\frac{\hbar^{\,2}}{2}}B_{1\infty}(\mathcal{S}_{\rm ref}^{m}\bPsi_{-X_1}^m,\varphi|_{x_1=-X_1})_{\omega_{h\widehat{1}}}
\label{eq:ii2}
\end{gather}
for any $\varphi\in S_h$ and $1\leq m\leq M$,
and the initial condition
\begin{gather}
 \Psi|_{t=0}=\Psi^0|_{\bar{\Omega}}\in S_h.
\label{eq:icom}
\end{gather}
Here
${\mathbf \Psi}^m_{\pm X_1}=\bigl\{\bigl.{\Psi}^0\bigr|_{x_1=\pm X_1},\ldots,\bigl.{\Psi}^m\bigr|_{x_1=\pm X_1}\bigr\}$ is a vector-function.\\
\par The operator $\mathcal{S}_{\rm ref}$ in the discrete TBC has the form
\begin{equation}
 {\mathcal S}^m_{\rm ref} {\mathbf \Phi}^m:=
 {\mathcal F}_2^{-1}\dots {\mathcal F}_n^{-1}\left[
 \sigma_{q_2}^{(2)}\dots\sigma_{q_n}^{(n)}
 R_{\mathbf q}*\Phi^{\mathbf q}\right]^m\ \ \text{on}\ \ \overline{\omega}_M^{\,\tau}
\label{Zlotnik_mini4:eq:l7}
\end{equation}
for any $\Phi$: $\omega_{h\widehat{1}}\times \overline{\omega}_M^{\,\tau}\to {\mathbb C}$ such that $\Phi^0=0$, where
${\mathbf \Phi}^m:=\{\Phi^0,\dots,\Phi^m\}$,
$\Phi^{\mathbf q}:=({\mathcal F}_n\dots ({\mathcal F}_2\Phi)^{(q_2)}\dots)^{(q_n)}$ and ${\mathbf q}=(q_2,\dots,q_n)$.
Here the kernel $R_{\mathbf q}$ can be computed by the recurrent formulas
\begin{gather*}
 R_{\mathbf q}^0=c_{1\mathbf q},\,\
 R_{\mathbf q}^1=-c_{1\mathbf q}\kappa_{\mathbf q} \mu_{\mathbf q},\,\
 R_{\mathbf q}^m=\frac{2m-3}{m}\kappa_{\mathbf q} \mu_{\mathbf q} R_{\mathbf q}^{m-1}
 -\frac{m-3}{m}\kappa_{\mathbf q}^2 R_{\mathbf q}^{m-2},\ m\geq 2,
\end{gather*}
with the coefficients defined by
\begin{gather}
 c_{1\mathbf q}=-\frac{|\alpha_{\mathbf q}|^{1/2}}{2}e^{-i(\arg \alpha_{\mathbf q})/2},\ \
 \kappa_{\mathbf q}=-e^{i\arg \alpha_{\mathbf q}},\ \
 \mu_{\mathbf q}=\frac{\beta_{\mathbf q}}{|\alpha_{\mathbf q}|}\in(-1,1),
\label{eq: c1vkmu}\\
 \alpha_{\mathbf q}=2a_{\mathbf q}+\frac{1}{3}h_1^2a_{\mathbf q}^2\neq 0,\ \
 \arg \alpha_{\mathbf q}\in(0,2\pi),\ \
 \beta_{\mathbf q}=2\Rea a_{\mathbf q}+\frac{1}{3}h_1^2|a_{\mathbf q}|^2,
\nonumber\\
 a_{\mathbf q}=\frac{V_{\infty}}{B_{1\infty}\hbar^2}+\frac{1}{2B_{1\infty}}\Bigr(B_{2\infty}\frac{\lambda^{(2)}_{q_2}}{\sigma^{(2)}_{q_2}}+\dots +B_{n\infty}\frac{\lambda^{(n)}_{q_n}}{\sigma^{(n)}_{q_n}}\Bigr)
 +i\, \frac{2\rho_\infty}{\tau\hbar B_{1\infty}}.
\nonumber
\end{gather}
\end{proposition}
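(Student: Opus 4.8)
The plan is to separate $\Pi$ into the bounded computational domain $\bar\Omega$ and the two semi-infinite exterior slabs $x_1\geq X_1$ and $x_1\leq -X_1$, where by \eqref{eq:s21} the coefficients are constant and, thanks to $\Psi^0\in S_{0h}$, the data vanish. Testing \eqref{p43} against functions supported in the exterior shows that there $\Psi$ obeys a homogeneous constant-coefficient scheme; a discrete Green (summation-by-parts) identity in $x_1$ over each exterior slab then collapses the exterior contribution of $\mathcal L_\Pi$ to boundary terms located on $x_1=\pm X_1$, leaving the $\Omega$-identity \eqref{eq:ii2} with two unknown fluxes. The whole task is then to identify these fluxes with $\pm\frac{\hbar^{\,2}}{2}B_{1\infty}(\mathcal S_{\rm ref}^m\bPsi_{\pm X_1}^m,\varphi|_{x_1=\pm X_1})_{\omega_{h\widehat{1}}}$, which requires solving the exterior scheme explicitly; the initial condition \eqref{eq:icom} is immediate by restriction.

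To solve the exterior scheme I would first diagonalize the transverse directions by the discrete sine transforms $\mathcal F_2,\dots,\mathcal F_n$. Because the polylinear mass and stiffness matrices factor as tensor products over the coordinate directions, this replaces the transverse operators $-D_k^2$ and the identity by the eigenvalues $\lambda_{q_k}^{(k)}$ and $\sigma_{q_k}^{(k)}$ and uncouples the exterior problem into a family, indexed by $\mathbf q=(q_2,\dots,q_n)$, of one-dimensional constant-coefficient Crank-Nicolson schemes in $x_1$ and the time index $m$. In each mode the potential term $V_\infty/(B_{1\infty}\hbar^2)$, the transverse contributions $B_{k\infty}\lambda_{q_k}^{(k)}/\sigma_{q_k}^{(k)}$ and the time term $2\rho_\infty i/(\tau\hbar B_{1\infty})$ combine into the coefficient $a_{\mathbf q}$ of the statement, while the extra $\frac13 h_1^2$ factor produced by the one-dimensional FEM mass matrix in $x_1$ turns $a_{\mathbf q}$ into $\alpha_{\mathbf q}=2a_{\mathbf q}+\frac13 h_1^2 a_{\mathbf q}^2$.

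For each mode I would pass to the generating function (discrete Laplace/$Z$-transform) $\sum_m(\cdot)^m z^m$ in time, which turns the Crank-Nicolson stepping into a rational dependence on $z$ and reduces the one-dimensional scheme to a three-point recurrence in the $x_1$-index with two reciprocal characteristic roots. Boundedness of $\Psi$ as $x_1\to\pm\infty$ selects the root of modulus below one, and the associated discrete normal flux at $x_1=\pm X_1$ becomes multiplication, in the $z$-variable, by a transfer function; inverting back to discrete time produces a convolution $R_{\mathbf q}*(\cdot)$ in $t$. A direct computation then identifies the generating function of the kernel as $\sum_{m\geq 0}R_{\mathbf q}^m z^m=c_{1\mathbf q}\bigl(1-2\kappa_{\mathbf q}\mu_{\mathbf q}z+\kappa_{\mathbf q}^2 z^2\bigr)^{1/2}$. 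Reading off its Taylor coefficients gives $R_{\mathbf q}^0=c_{1\mathbf q}$ and $R_{\mathbf q}^1=-c_{1\mathbf q}\kappa_{\mathbf q}\mu_{\mathbf q}$, and since the coefficients of a series $\sqrt{1-2\mu z+z^2}$ satisfy a three-term recurrence with rational-in-$m$ coefficients (of Legendre/Gegenbauer type), differentiating this square root and matching powers of $z$ reproduces exactly the stated recurrence for $m\geq 2$. Restoring the factors $\sigma_{q_2}^{(2)}\cdots\sigma_{q_n}^{(n)}$ carried by the transverse mass matrix and applying $\mathcal F_2^{-1}\cdots\mathcal F_n^{-1}$ reassembles the operator $\mathcal S_{\rm ref}$ of \eqref{Zlotnik_mini4:eq:l7}.

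The main obstacle is the explicit exterior solution and, within it, the correct branch selection. One must verify that the reciprocal root of modulus less than one is the physically outgoing, non-reflecting and numerically stable one; this is what fixes the branch of the square root, forcing $c_{1\mathbf q}=-\frac12|\alpha_{\mathbf q}|^{1/2}e^{-i(\arg\alpha_{\mathbf q})/2}$ and $\kappa_{\mathbf q}=-e^{i\arg\alpha_{\mathbf q}}$ with the cut $\arg\alpha_{\mathbf q}\in(0,2\pi)$, and guarantees $\mu_{\mathbf q}=\beta_{\mathbf q}/|\alpha_{\mathbf q}|\in(-1,1)$ so that the quadratic $1-2\kappa_{\mathbf q}\mu_{\mathbf q}z+\kappa_{\mathbf q}^2 z^2$ stays off the branch cut and $R_{\mathbf q}$ is well defined. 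Checking that the transfer function coincides with this particular branch and quadratic, rather than its reciprocal or the other root, and tracking all the FEM mass-matrix and $\sigma$-factors through the transforms, is the delicate computational heart of the argument.
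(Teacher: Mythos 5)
Your proposal is correct and takes essentially the same route as the paper's: the paper itself omits the proof of this proposition, deferring to \cite{DZZ09,ZZ11,IZ11,ZZ12} (with the improved coefficient formulas from \cite{ZZ14a}), and the derivation there is exactly your plan --- restriction to the constant-coefficient exterior slabs with zero data (using $\Psi^0\in S_{0h}$), transverse diagonalization by the discrete sine transforms producing $a_{\mathbf q}$ via the eigenvalues $\lambda_{q_k}^{(k)}$, $\sigma_{q_k}^{(k)}$, a $Z$-transform in time reducing each mode to a symmetric three-point recurrence in $x_1$ whose decaying characteristic root gives the flux transfer function, and branch selection fixing $c_{1\mathbf q}$, $\kappa_{\mathbf q}$, $\mu_{\mathbf q}$. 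Your identification of the kernel's generating function as $c_{1\mathbf q}\bigl(1-2\kappa_{\mathbf q}\mu_{\mathbf q}z+\kappa_{\mathbf q}^2z^2\bigr)^{1/2}$ checks out: differentiating it and matching powers of $z$ yields precisely $R_{\mathbf q}^0=c_{1\mathbf q}$, $R_{\mathbf q}^1=-c_{1\mathbf q}\kappa_{\mathbf q}\mu_{\mathbf q}$ and the stated Gegenbauer-type ($\lambda=-\tfrac12$) recurrence, so the plan is sound.
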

\par The next lemma is important to prove stability results for method \eqref{eq:ii2}, \eqref{eq:icom}.
\begin{lemma}
\label{lem1}
The operator ${\mathcal S}^m_{\rm ref}$ satisfies the inequalities \cite{DZ06,DZ07}
\begin{gather*}
 \Ima\sum_{l=1}^m \left({\mathcal S}^l_{\rm ref}{\mathbf \Phi}^l,\overline{s}_t \Phi^l\right)_{\omega_{h\widehat{1}}}\tau \geq 0,\ \
 \Ima\sum_{l=1}^m \left({\mathcal S}^l_{\rm ref}{\mathbf \Phi}^l,
 (i\hbar\overline{\partial}_t+\hat{v}\overline{s}_t)\Phi^l\right)_{\omega_{h\widehat{1}}}\tau \geq 0\ \
\end{gather*}
on $\overline{\omega}_M^{\,\tau}$,
for any $\Phi$: $\omega_{h\widehat{1}}\times\overline{\omega}_M^{\,\tau}\to {\mathbb C}$ such that
$\Phi^0=0$ and $\hat{v}\geq -\frac{V_\infty}{\rho_\infty}$ (see \eqref{eq:s21}).
\end{lemma}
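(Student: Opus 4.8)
The plan is to exploit the tensor-product structure of $\mathcal{S}^m_{\rm ref}$ to diagonalize it in the transverse space variables and thereby reduce both inequalities to scalar (single transverse mode) inequalities in time, which are the one-dimensional results of \cite{DZ06,DZ07}. First I would record the Parseval-type identity for the discrete sine transforms: from $\sum_{j=1}^{J_k-1}\sin\frac{\pi q j}{J_k}\sin\frac{\pi q' j}{J_k}=\frac{J_k}{2}\delta_{qq'}$ one gets $(U,W)_{\omega_{h\widehat{1}}}=\bigl(\prod_{k=2}^n\frac{X_k}{2}\bigr)\sum_{\mathbf q}U^{\mathbf q}(W^{\mathbf q})^*$ (using $J_kh_k=X_k$). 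Since $\mathcal{S}^m_{\rm ref}$ acts in the $\mathbf q$-variable as multiplication by the positive factor $\sigma_{q_2}^{(2)}\cdots\sigma_{q_n}^{(n)}$ (recall $\sigma_q^{(k)}\in(\frac13,1)$) followed by the time convolution with $R_{\mathbf q}$, and since $\overline{s}_t$ and $\overline{\partial}_t$ act on the time index and hence commute with each ${\mathcal F}_k$, applying this identity turns each spatial inner product into $\bigl(\prod_{k=2}^n\frac{X_k}{2}\bigr)\sum_{\mathbf q}\sigma_{q_2}^{(2)}\cdots\sigma_{q_n}^{(n)}$ times a purely scalar time form in the mode $\Phi^{\mathbf q}$ (for $n=1$ there is no sum and the $\sigma$-factor is $1$). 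As all these constants are strictly positive and $\Ima$ is $\mathbb{R}$-linear, it suffices to prove, for each fixed $\mathbf q$ and each scalar sequence $\phi:=\Phi^{\mathbf q}$ with $\phi^0=0$, that $\Ima\sum_{l=1}^m(R_{\mathbf q}*\phi)^l(\overline{s}_t\phi^l)^*\,\tau\ge 0$ and the analogous statement with $(\overline{s}_t\phi^l)^*$ replaced by $\bigl((i\hbar\overline{\partial}_t+\hat v\,\overline{s}_t)\phi^l\bigr)^*$.

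Next I would identify the generating function of the kernel. The three-term recurrence for $R_{\mathbf q}^m$ is exactly the recurrence for the Taylor coefficients at $z=0$ of $\sqrt{1-2\kappa_{\mathbf q}\mu_{\mathbf q}z+\kappa_{\mathbf q}^2z^2}$, normalized by $R_{\mathbf q}^0=c_{1\mathbf q}$ and $R_{\mathbf q}^1=-c_{1\mathbf q}\kappa_{\mathbf q}\mu_{\mathbf q}$; hence $\widehat{R}_{\mathbf q}(z):=\sum_{m\ge0}R_{\mathbf q}^mz^m=c_{1\mathbf q}\sqrt{(1-\kappa_{\mathbf q}e^{i\gamma_{\mathbf q}}z)(1-\kappa_{\mathbf q}e^{-i\gamma_{\mathbf q}}z)}$, where $\cos\gamma_{\mathbf q}=\mu_{\mathbf q}$ gives $\gamma_{\mathbf q}\in(0,\pi)$ real and $|\kappa_{\mathbf q}|=1$. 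This shows that the scalar kernel $R_{\mathbf q}$ is precisely the one-dimensional discrete TBC kernel of \cite{DZ06,DZ07} in which the far-field potential is replaced by the effective value $V_\infty^{\mathbf q}:=V_\infty+\frac{\hbar^2}{2}\bigl(B_{2\infty}\frac{\lambda^{(2)}_{q_2}}{\sigma^{(2)}_{q_2}}+\cdots+B_{n\infty}\frac{\lambda^{(n)}_{q_n}}{\sigma^{(n)}_{q_n}}\bigr)$ entering $\Rea a_{\mathbf q}=V_\infty^{\mathbf q}/(B_{1\infty}\hbar^2)$, while $\Ima a_{\mathbf q}=2\rho_\infty/(\tau\hbar B_{1\infty})>0$. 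Consequently the two scalar inequalities coincide with those proved in \cite{DZ06,DZ07}; moreover $V_\infty^{\mathbf q}\ge V_\infty$, so the single hypothesis $\hat v\ge-V_\infty/\rho_\infty$ implies the per-mode hypothesis $\hat v\ge-V_\infty^{\mathbf q}/\rho_\infty$ uniformly in $\mathbf q$, and the cited result applies to every mode at once. This completes the reduction.

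For the scalar inequalities themselves — the part carried out in \cite{DZ06,DZ07} — I would pass to the $z$-transform in time and Parseval on the unit circle, so that $\overline{s}_t$ and $\overline{\partial}_t$ become multiplication by $\frac{1+z}{2}$ and $\frac{1-z}{\tau}$, and reduce each inequality to a definite-sign condition for the boundary value of $\frac{1+z}{2}\,\widehat{R}_{\mathbf q}(z)$, respectively of $\bigl(i\hbar\frac{1-z}{\tau}+\hat v\frac{1+z}{2}\bigr)\widehat{R}_{\mathbf q}(z)$, on $|z|=1$. These signs are then checked by tracking $\arg\widehat{R}_{\mathbf q}$ through the explicit factorization above together with the definitions of $\alpha_{\mathbf q}$, $\beta_{\mathbf q}$, $c_{1\mathbf q}$ and $\kappa_{\mathbf q}$.

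The main obstacle is twofold. First, the sums are finite and causal (they run only to $l=m$), so one cannot directly invoke Parseval on $\ell^2(\mathbb{Z}_{\ge0})$; this is handled by the standard device of extending $\phi$ by zero past step $m$ and using causality of the convolution, i.e. by recognizing the form as generated by the semi-infinite Toeplitz structure of $\widehat{R}_{\mathbf q}$. Second, the $\hat v$-dependent inequality is the delicate one: writing it as $-\hbar\Rea\sum_l(R_{\mathbf q}*\phi)^l(\overline{\partial}_t\phi^l)^*\tau+\hat v\,\Ima\sum_l(R_{\mathbf q}*\phi)^l(\overline{s}_t\phi^l)^*\tau$, the first term has no fixed sign on its own and, since $\hat v$ may be negative, cannot simply be absorbed into the (nonnegative) second term; it must be controlled jointly with it, and this is exactly where the constraint $\hat v\ge-V_\infty/\rho_\infty$, acting through $\Rea a_{\mathbf q}$ and $\Ima a_{\mathbf q}$, enters to fix the sign of the combined symbol on $|z|=1$.
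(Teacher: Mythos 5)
Your proposal is correct and takes essentially the same route as the paper: the paper does not prove Lemma~\ref{lem1} in the text but defers to \cite{DZ06,DZ07} (and, for the multidimensional cases, to \cite{DZZ09,ZZ11,IZ11}), where exactly your argument is carried out --- diagonalization by the discrete sine transforms with positive weights $\sigma_{q_2}^{(2)}\cdots\sigma_{q_n}^{(n)}$, reduction to per-mode scalar kernel inequalities in time with the effective potential $V_\infty+\frac{\hbar^{\,2}}{2}\bigl(B_{2\infty}\lambda^{(2)}_{q_2}/\sigma^{(2)}_{q_2}+\dots+B_{n\infty}\lambda^{(n)}_{q_n}/\sigma^{(n)}_{q_n}\bigr)\ge V_\infty$, so that the single hypothesis $\hat{v}\ge-V_\infty/\rho_\infty$ covers every mode, followed by the generating-function/boundary-symbol sign analysis of the 1D kernel. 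Your identification of $\sum_{m\ge 0}R_{\mathbf q}^m z^m=c_{1\mathbf q}\sqrt{1-2\kappa_{\mathbf q}\mu_{\mathbf q}z+\kappa_{\mathbf q}^2z^2}$ from the recurrence, your Parseval constants $\prod_{k=2}^n(X_k/2)$, and your flagging of causality and of the sign-indefinite term $-\hbar\Rea\sum_l(R_{\mathbf q}*\phi)^l(\overline{\partial}_t\phi^l)^*\tau$ in the second inequality all match the cited proofs, so the reconstruction is sound.
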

\par Let $\ell^m(\varphi)$ be a conjugate linear functional on $S_h$ that we add to the right-hand side of \eqref{eq:ii2} to study stability in more detail.
\begin{proposition}
Let $\ell^m(\varphi)=(F^m,\varphi)_{L^2(\Omega)}$ with $F^m\in L^2(\Omega)$ for $1\leq m\leq M$.
Then the solution to \eqref{eq:ii2}, \eqref{eq:icom} is unique and satisfies the first stability bound
\begin{equation}
 \max_{0\leq m\leq M} \|\Psi^m\|_{L^{2,\rho}(\Omega)}
 \leq \|\Psi^0\|_{L^{2,\rho}(\Omega)}
 +\frac{2}{\hbar}\sum_{m=1}^M \left\|F^m\right\|_{L^{2,1/\rho}(\Omega)}\tau.
 \label{p12}
\end{equation}
\label{prop:1}
\end{proposition}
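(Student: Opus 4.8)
The plan is to run the standard discrete energy argument for Crank--Nicolson schemes, now with the boundary contributions of the discrete TBC controlled by Lemma~\ref{lem1}; this mirrors the $\Pi$--version in Proposition~\ref{prop:1Pi}, the only new ingredient being the two artificial-boundary terms. First I would take the test function $\varphi=\overline{s}_t\Psi^l$ in the perturbed identity \eqref{eq:ii2} (with $\ell^l(\varphi)=(F^l,\varphi)_{L^2(\Omega)}$ added) and take its imaginary part. Since the form $\mathcal{L}_\Omega$ is Hermitian-symmetric, $\mathcal{L}_\Omega(\overline{s}_t\Psi^l,\overline{s}_t\Psi^l)$ is real and drops out. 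On the left-hand side, expressing $\overline{\partial}_t\Psi^l$ and $\overline{s}_t\Psi^l$ through $\Psi^{l-1},\Psi^l$ gives $\Ima\bigl[i\hbar(\overline{\partial}_t\Psi^l,\overline{s}_t\Psi^l)_{L^{2,\rho}(\Omega)}\bigr]=\frac{\hbar}{2\tau}\bigl(\|\Psi^l\|^2_{L^{2,\rho}(\Omega)}-\|\Psi^{l-1}\|^2_{L^{2,\rho}(\Omega)}\bigr)$, a quantity that telescopes upon summation.

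Next I would multiply by $\tau$ and sum over $l=1,\dots,m'$ for a fixed $m'\le M$; the left-hand side collapses to $\frac{\hbar}{2}\bigl(\|\Psi^{m'}\|^2_{L^{2,\rho}(\Omega)}-\|\Psi^0\|^2_{L^{2,\rho}(\Omega)}\bigr)$. The crucial step is the treatment of the two boundary terms: taking $\Phi^l=\Psi^l|_{x_1=\pm X_1}$ as a function on $\omega_{h\widehat{1}}$, one has $\Phi^0=0$ because $\Psi^0\in S_{0h}$ vanishes on $\Omega\setminus\Omega_0$ and hence at $x_1=\pm X_1$; therefore the first inequality of Lemma~\ref{lem1} applies at each artificial boundary and shows that the combined boundary contribution to the summed imaginary part is non-positive and may be discarded. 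I expect this to be the main obstacle, since it is exactly here that the dissipativity of the discrete TBC (and not mere algebra) is invoked, and the sign bookkeeping of the $\pm X_1$ fluxes against the operator $\mathcal{S}_{\rm ref}$ must be tracked carefully.

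It then remains to bound the forcing. By the weighted Cauchy--Schwarz inequality, $|(F^l,\overline{s}_t\Psi^l)_{L^2(\Omega)}|\le\|F^l\|_{L^{2,1/\rho}(\Omega)}\|\overline{s}_t\Psi^l\|_{L^{2,\rho}(\Omega)}$, and $\|\overline{s}_t\Psi^l\|_{L^{2,\rho}(\Omega)}\le\max_{0\le k\le m'}\|\Psi^k\|_{L^{2,\rho}(\Omega)}=:\mathcal{E}_{m'}$. Collecting the estimates and evaluating at the index realizing the maximum yields $\mathcal{E}_{m'}^2\le\|\Psi^0\|^2_{L^{2,\rho}(\Omega)}+\frac{2}{\hbar}\,\mathcal{E}_{m'}\sum_{l=1}^M\|F^l\|_{L^{2,1/\rho}(\Omega)}\tau$; solving this quadratic inequality in $\mathcal{E}_{m'}$ and using $\sqrt{a^2+b^2}\le a+b$ gives precisely \eqref{p12} with $m'=M$. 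Finally, uniqueness follows from this a priori bound applied to the difference of two solutions (zero data forcing the zero solution); since at each time level the method is a square linear algebraic system for $\Psi^m$, injectivity yields solvability, so existence follows by induction on $m$.
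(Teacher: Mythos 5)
Your proposal is correct and is essentially the intended argument: the paper itself gives no proof of Proposition~\ref{prop:1}, referring instead to \cite{DZZ09,ZZ11,IZ11}, but the route it signals --- test \eqref{eq:ii2} with $\varphi=\overline{s}_t\Psi^l$, take imaginary parts so that the Hermitian form $\mathcal{L}_\Omega$ drops out, telescope the term $\frac{\hbar}{2\tau}\bigl(\|\Psi^l\|^2_{L^{2,\rho}(\Omega)}-\|\Psi^{l-1}\|^2_{L^{2,\rho}(\Omega)}\bigr)$, discard the boundary sums by the first inequality of Lemma~\ref{lem1} (valid since $\Psi^0\in S_{0h}$ forces $\Phi^0=0$ at $x_1=\pm X_1$), and close with weighted Cauchy--Schwarz plus the quadratic inequality in $\mathcal{E}_{m'}$, with uniqueness (hence solvability of the square per-level linear system) read off from the homogeneous bound --- is exactly what you wrote. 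The one point you should make fully explicit is the sign bookkeeping you yourself flag: with the signs as printed in \eqref{eq:ii2} the $-X_1$ term carries a plus, so a purely literal application of Lemma~\ref{lem1} there would give the wrong-signed contribution, and one must invoke the orientation convention built into $\mathcal{S}_{\rm ref}$ at the left boundary (via the mirror reduction of the left exterior problem, cf.\ \cite{DZ06,DZ07}) under which both boundary contributions are indeed dissipative.
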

\par We introduce the ``energy'' norm on $\Omega$ such that
\begin{gather}
\|w\|_{{\mathcal H}+\hat{v}\rho;\,\Omega}^2
:={\mathcal L}_{\Omega}(w,w)+\hat{v}\left\|\sqrt{\rho}\,w\right\|^2_{L^2(\Omega)}>0,
\label{p20}
\end{gather}
for any $w\in \tilde{H}^1(\Omega):=\{H^1(\Omega);\, w|_{(-X_1,X_1)\times\partial\Pi_{\widehat{1}}}=0\}$ except for $w=0$, and some real number $\hat{v}\geq -\frac{V_\infty}{\rho_\infty}$.
In particular, for $\hat{v}$ so large that
$\textstyle{\frac{\hbar^{\,2}}{2}}\underline{B}\lambda_0+V(x)+\hat{v}\rho(x)>0$ on $\Omega$,
\eqref{p20} is valid.
Define also the respective dual mesh depending norm
\[
 \|w\|_{H_h^{-1}(\Omega)}
 := \max_{\varphi\in S_h:\,\|\varphi\|_{{\mathcal H}+\hat{v}\rho;\,\Omega}=1}
 |\langle w,\varphi\rangle_{\Omega}|
 \leq c\|w\|_{H^{-1}(\Omega)},\ \ H^{-1}(\Omega)=[\tilde{H}^1(\Omega)]^*,
\]
where $\langle w,\varphi\rangle_{\Omega}$ is the conjugate duality relation on
$H^{-1}(\Omega)\times \tilde{H}^1(\Omega)$.
\begin{proposition}
Let $\ell^m(\varphi)=\langle F^m,\varphi\rangle_{\Omega}$
with $F^m\in  H^{-1}(\Omega)$ for $1\leq m\leq M$ and $F^0\in H^{-1}(\Omega)$ be arbitrary.
Then the solution to \eqref{eq:ii2}, \eqref{eq:icom} is unique and satisfies the second stability bound
\begin{gather}
\max_{0\leq m\leq M}\left\|\Psi^m\right\|_{\mathcal{H}+\hat{v}\rho;\,\Omega}
 \leq \left\| \Psi^0 \right\|_{{\mathcal H}+\hat{v}\rho;\,\Omega}
\nonumber\\
+4\sum_{m=1}^M \Bigl(\frac{|\hat{v}|}{\hbar}\,\|F^m\|_{H_h^{-1}(\Omega)}
 + \left\| \overline{\partial}_tF^m\right\|_{H_h^{-1}(\Omega)}\Bigr)\tau
 +4\left\| F^0\right\|_{H_h^{-1}(\Omega)}.
\label{p22}
\end{gather}
\label{prop:2}
\end{proposition}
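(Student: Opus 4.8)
The plan is to use a discrete energy argument. I would test the scheme \eqref{eq:ii2}, with the functional $\ell^m$ added to its right-hand side, by the admissible function $\varphi=(i\hbar\overline{\partial}_t+\hat{v}\,\overline{s}_t)\Psi^m\in S_h$, and then take imaginary parts and sum over $m$. This choice is dictated precisely by the second inequality of Lemma~\ref{lem1}: the boundary terms it produces at $x_1=\pm X_1$ are exactly $(\mathcal{S}^m_{\rm ref}\bPsi^m_{\pm X_1},(i\hbar\overline{\partial}_t+\hat{v}\,\overline{s}_t)\Psi^m|_{x_1=\pm X_1})_{\omega_{h\widehat{1}}}$, matching the lemma with $\Phi^l=\Psi^l|_{x_1=\pm X_1}$. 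Here $\Phi^0=0$ because $\Psi^0\in S_{0h}$ vanishes on $\Omega\setminus\Omega_0$, so the lemma applies and both boundary sums carry the dissipative sign and can be discarded.

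First I would show that the principal part telescopes into the energy norm. Since $i\hbar\|\overline{\partial}_t\Psi^m\|^2_{L^{2,\rho}(\Omega)}$ is purely imaginary while $\mathcal{L}_\Omega(\overline{s}_t\Psi^m,\overline{s}_t\Psi^m)$ and $\|\overline{s}_t\Psi^m\|^2_{L^{2,\rho}(\Omega)}$ are real, only the cross terms survive after taking imaginary parts. Using the Hermitian symmetry of $\mathcal{L}_\Omega$ together with the identities $\Rea(\overline{\partial}_t\Psi^m,\overline{s}_t\Psi^m)_{L^{2,\rho}(\Omega)}=\tfrac12\overline{\partial}_t\|\Psi^m\|^2_{L^{2,\rho}(\Omega)}$ and $\Rea\,\mathcal{L}_\Omega(\overline{\partial}_t\Psi^m,\overline{s}_t\Psi^m)=\tfrac12\overline{\partial}_t\mathcal{L}_\Omega(\Psi^m,\Psi^m)$, the imaginary part of the left-hand side and of the $\mathcal{L}_\Omega$-term combine into $\tfrac{\hbar}{2}\overline{\partial}_t\|\Psi^m\|^2_{{\mathcal H}+\hat{v}\rho;\,\Omega}$. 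Summation over $m=1,\dots,m_0$ telescopes this to $\tfrac{\hbar}{2}\bigl(\|\Psi^{m_0}\|^2_{{\mathcal H}+\hat{v}\rho;\,\Omega}-\|\Psi^0\|^2_{{\mathcal H}+\hat{v}\rho;\,\Omega}\bigr)$, and after dropping the boundary sums one is left with $\|\Psi^{m_0}\|^2_{{\mathcal H}+\hat{v}\rho;\,\Omega}-\|\Psi^0\|^2_{{\mathcal H}+\hat{v}\rho;\,\Omega}\le\tfrac{2}{\hbar}\sum_{m=1}^{m_0}\Ima\,\ell^m(\varphi)\,\tau$.

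It remains to bound the functional term. I would split $\ell^m(\varphi)=\langle F^m,(i\hbar\overline{\partial}_t+\hat{v}\,\overline{s}_t)\Psi^m\rangle_\Omega$ into its two pieces. The $\hat{v}$-piece is controlled directly, through the definition of the dual norm, by $\tfrac{|\hat{v}|}{\hbar}\|F^m\|_{H_h^{-1}(\Omega)}\|\overline{s}_t\Psi^m\|_{{\mathcal H}+\hat{v}\rho;\,\Omega}$. For the $\overline{\partial}_t$-piece, which up to a harmless constant factor equals $\Rea\sum_m\langle F^m,\Psi^m-\Psi^{m-1}\rangle_\Omega$, I would apply summation by parts in time to move the difference onto $F$; this produces the endpoint term $\langle F^{m_0},\Psi^{m_0}\rangle_\Omega$, the initial term $\langle F^1,\Psi^0\rangle_\Omega$, and the sum $\sum_m\langle\overline{\partial}_tF^{m+1},\Psi^m\rangle_\Omega\,\tau$, each factor $\langle\,\cdot\,,\Psi^m\rangle_\Omega$ being estimated by $\|\cdot\|_{H_h^{-1}(\Omega)}\|\Psi^m\|_{{\mathcal H}+\hat{v}\rho;\,\Omega}$. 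Re-expressing the pointwise norms by the telescoping bound $\|F^{m_0}\|_{H_h^{-1}(\Omega)}\le\|F^0\|_{H_h^{-1}(\Omega)}+\sum_{m=1}^{m_0}\|\overline{\partial}_tF^m\|_{H_h^{-1}(\Omega)}\tau$ then produces exactly the norms $\|F^0\|_{H_h^{-1}(\Omega)}$, $\|F^m\|_{H_h^{-1}(\Omega)}$ and $\|\overline{\partial}_tF^m\|_{H_h^{-1}(\Omega)}$ occurring in \eqref{p22}.

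Denoting $E:=\max_{0\le m\le M}\|\Psi^m\|_{{\mathcal H}+\hat{v}\rho;\,\Omega}$ and bounding every $\|\Psi^m\|_{{\mathcal H}+\hat{v}\rho;\,\Omega}$ on the right by $E$, the accumulated estimate takes the scalar quadratic form $E^2\le\|\Psi^0\|^2_{{\mathcal H}+\hat{v}\rho;\,\Omega}+2\mathcal{B}\,E$, where $\mathcal{B}$ is the sum of $F$-norms carrying the prefactors displayed in \eqref{p22}; solving it gives $E\le\|\Psi^0\|_{{\mathcal H}+\hat{v}\rho;\,\Omega}+2\mathcal{B}$, which is \eqref{p22} once the constants are collected. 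Uniqueness follows at once by applying this bound to the homogeneous data $\Psi^0=0$, $F^m\equiv0$, which forces $\Psi\equiv0$; since each time step is a square system of linear algebraic equations, uniqueness is equivalent to solvability and hence yields existence. I expect the main difficulty to lie in the summation-by-parts bookkeeping of the third step --- transferring $\overline{\partial}_t$ onto $F$, handling the endpoint contributions, and re-expressing the pointwise $F$-norms through cumulative sums so that the final constants match \eqref{p22} --- together with the verification that both boundary sums, at $x_1=X_1$ and at $x_1=-X_1$, inherit the correct sign from Lemma~\ref{lem1} and may therefore be dropped rather than estimated.
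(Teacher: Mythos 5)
Your proposal is correct and is essentially the intended proof: the paper itself does not write out a proof of Proposition~\ref{prop:2} (it defers to \cite{DZZ09,ZZ11,IZ11}) but states Lemma~\ref{lem1} precisely for this purpose, and the argument there is exactly yours --- test \eqref{eq:ii2} with $\varphi=(i\hbar\overline{\partial}_t+\hat{v}\,\overline{s}_t)\Psi^m$, take imaginary parts so that $\|\Psi^m\|^2_{\mathcal{H}+\hat{v}\rho;\,\Omega}$ telescopes, discard the boundary sums by the second inequality of Lemma~\ref{lem1} (using $\Psi^0\in S_{0h}$ so that $\Phi^0=0$), Abel-sum the $\overline{\partial}_t$-part of the $F$-term (which is what generates the $\|F^0\|_{H_h^{-1}(\Omega)}$ and $\|\overline{\partial}_tF^m\|_{H_h^{-1}(\Omega)}$ contributions in \eqref{p22}), and resolve the resulting quadratic inequality in $\max_{0\leq m\leq M}\|\Psi^m\|_{\mathcal{H}+\hat{v}\rho;\,\Omega}$. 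Your constant bookkeeping (the factors $4$ and $|\hat{v}|/\hbar$) and the uniqueness argument via homogeneous data are sound; the one step genuinely requiring the care you flag is the sign at $x_1=-X_1$, where the term enters \eqref{eq:ii2} with a plus and its dissipativity rests on the reflected orientation built into the derivation of the discrete TBC at that end, rather than on a verbatim application of Lemma~\ref{lem1} to the unreflected trace.
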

\par Propositions \ref{prop:1Pi}--\ref{prop:2} and Lemma \ref{lem1} in the quite similar cases of the semi-infinite $\Pi=(0,\infty)$ ($n=1$) and $\Pi=(0,\infty)\times (0,X_2)$ ($n=2$) were proved respectively in \cite{DZZ09} and \cite{ZZ11,IZ11} (see also \cite{IZ13}), where families of finite-difference schemes with space averages depending on a parameter $\theta$ were treated covering, in particular, the linear and bilinear FEMs (for $\theta=\frac16$).
For the presented improvement in formulas \eqref{eq: c1vkmu}, see also \cite{ZZ14a}.
For $n=1$, the results are as well particular cases of those from \cite{ZZ12} (given specifically for general FEM).
The case $n\geq 3$ can be treated in the same manner as $n=2$ (for such an example, see \cite{ZDZRPROC14}).
\par The numerical results for the method can be found in \cite{DZZ09,IZ11,IZ13}.

\section{Error estimates}
\label{errest}
\par Let condition \eqref{p20inf} be valid and $\sigma w$ be \textit{the elliptic projection} of $w\in H_0^1(\Pi)$ onto $S_h(\bar{\Pi})$ such that
\begin{gather}
  \mathcal{L}_{\Pi}(\sigma w,\varphi)+\hat{v}(\sigma w,\varphi)_{L^{2,\rho}(\Pi)}
 =\mathcal{L}_{\Pi}(w,\varphi)+\hat{v}(w,\varphi)_{L^{2,\rho}(\Pi)}
\label{proid}
\end{gather}
for any $\varphi\in S_h(\bar{\Pi})$. Note that $\sigma w$ exists and is unique. We also assume below that $B\in W^{1,\infty}(\Pi)$ and then the following error estimate holds
\begin{gather}
\hspace{-6pt} \|w-\sigma w\|_{L^{2,\rho}(\Pi)}\leq c|h|^2\|(\mathcal{H}_\rho+\hat{v})w\|_{L^{2,\rho}(\Pi)}\
 \text{for any}\ w\in H^2(\Pi)\cap H_0^1(\Pi).
\label{errappr}
\end{gather}
\par We consider $\mathcal{H}_\rho:=\frac{1}{\rho}\mathcal{H}$ as an unbounded operator in $L^{2,\rho}(\Pi)$ with $\mathcal{D}(\mathcal{H}_\rho)=H^2(\Pi)\cap H_0^1(\Pi)$.
Assume below that $\psi^0\in \mathcal{D}(\mathcal{H}_\rho^3)$.
\begin{proposition}
The following first error estimate holds
\begin{gather}
 \max_{0\leq m\leq M}\|(\psi-\Psi)^m\|_{L^{2,\rho}(\Pi)}
 \leq\|\Psi^0-\sigma\psi^0\|_{L^{2,\rho}(\Pi)}
\nonumber\\[1mm]
 +c(1+T)\left\{\tau_{\rm max}^2\|\mathcal{H}_\rho^3\psi^0\|_{L^{2,\rho}(\Pi)}
 +|h|^2\bigl(\|\mathcal{H}_\rho^2\psi^0\|_{L^{2,\rho}(\Pi)}
       +\|\psi^0\|_{L^{2,\rho}(\Pi)}\bigr)
 \right\}.
\label{err12}
\end{gather}
Here $\sigma\psi^0$ can be replaced by $\psi^0$.
\label{prop:1errPi}
\end{proposition}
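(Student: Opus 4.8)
The plan is to use the classical decomposition of the error into an elliptic-projection part and a purely discrete part, and then feed the consistency error of the discrete part into the first stability bound of Proposition~\ref{prop:1Pi}.

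First I would record the temporal regularity of the exact solution. By \eqref{p20inf} the form $\mathcal{L}_\Pi(\cdot,\cdot)+\hat v(\cdot,\cdot)_{L^{2,\rho}(\Pi)}$ is bounded and coercive on $H_0^1(\Pi)$, so $\mathcal{H}_\rho$ is self-adjoint and bounded below in $L^{2,\rho}(\Pi)$ and the weak solution of \eqref{weak1} is $\psi(\cdot,t)=e^{-it\mathcal{H}_\rho/\hbar}\psi^0$. Since $\psi^0\in\mathcal{D}(\mathcal{H}_\rho^3)$ and the unitary group commutes with $\mathcal{H}_\rho$, one gets $\psi(\cdot,t)\in\mathcal{D}(\mathcal{H}_\rho^3)$ with the norm identities $\|\mathcal{H}_\rho^k\psi(\cdot,t)\|_{L^{2,\rho}(\Pi)}=\|\mathcal{H}_\rho^k\psi^0\|_{L^{2,\rho}(\Pi)}$ for $0\le k\le 3$ and all $t$, and $D_t^k\psi=(-i/\hbar)^k\mathcal{H}_\rho^k\psi$ in $C([0,T];L^{2,\rho}(\Pi))$. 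These are the only consequences of the assumption $\psi^0\in\mathcal{D}(\mathcal{H}_\rho^3)$ I would use; together with the self-adjoint interpolation bound $\|\mathcal{H}_\rho\psi^0\|_{L^{2,\rho}(\Pi)}^2=(\mathcal{H}_\rho^2\psi^0,\psi^0)_{L^{2,\rho}(\Pi)}\le\|\mathcal{H}_\rho^2\psi^0\|_{L^{2,\rho}(\Pi)}\|\psi^0\|_{L^{2,\rho}(\Pi)}$ they let me replace every intermediate power $\mathcal{H}_\rho$ by $\mathcal{H}_\rho^2$ and $1$.

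Next I would split $(\psi-\Psi)^m=(\psi^m-\sigma\psi^m)+\theta^m$ with $\theta^m:=\sigma\psi^m-\Psi^m\in S_h(\bar{\Pi})$. The projection part is bounded at once by \eqref{errappr} applied to $\psi^m\in\mathcal{D}(\mathcal{H}_\rho)$, the norm preservation and the interpolation bound, giving $\|(\psi-\sigma\psi)^m\|_{L^{2,\rho}(\Pi)}\le c|h|^2(\|\mathcal{H}_\rho^2\psi^0\|_{L^{2,\rho}(\Pi)}+\|\psi^0\|_{L^{2,\rho}(\Pi)})$; this supplies the $T$-independent part of \eqref{err12}. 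For $\theta$ I would subtract the scheme \eqref{p43} from the projected, time-averaged identity \eqref{weak1}: using $\mathcal{L}_\Pi(\sigma w,\varphi)=\mathcal{L}_\Pi(w,\varphi)+\hat v(w-\sigma w,\varphi)_{L^{2,\rho}(\Pi)}$ from \eqref{proid} and $\mathcal{L}_\Pi(\overline{s}_t\psi^m,\varphi)=i\hbar(\overline{s}_tD_t\psi^m,\varphi)_{L^{2,\rho}(\Pi)}$, the function $\theta$ satisfies \eqref{p43} with the added functional $\ell_\infty^m(\varphi)=(\tilde F^m,\varphi)_{L^{2,\rho}(\Pi)}$ and initial value $\theta^0=\sigma\psi^0-\Psi^0$, where
\[
 \tilde F^m=i\hbar\bigl(\overline{\partial}_t\psi^m-\overline{s}_tD_t\psi^m\bigr)
 -i\hbar(I-\sigma)\overline{\partial}_t\psi^m-\hat v(I-\sigma)\overline{s}_t\psi^m .
\]

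Finally I would estimate the three parts of $\tilde F^m$. The first is the Crank-Nicolson truncation error: its Peano-kernel representation gives $\|i\hbar(\overline{\partial}_t\psi^m-\overline{s}_tD_t\psi^m)\|_{L^{2,\rho}(\Pi)}\le\frac{\hbar\tau_m^2}{12}\max_{[t_{m-1},t_m]}\|D_t^3\psi\|_{L^{2,\rho}(\Pi)}=\frac{\tau_m^2}{12\hbar^2}\|\mathcal{H}_\rho^3\psi^0\|_{L^{2,\rho}(\Pi)}$. For the remaining two I would apply \eqref{errappr} to $\overline{\partial}_t\psi^m,\overline{s}_t\psi^m\in\mathcal{D}(\mathcal{H}_\rho)$, using that the closed operator $\mathcal{H}_\rho$ commutes with $\overline{\partial}_t$ and $\overline{s}_t$ (which turn one power of $\mathcal{H}_\rho$ into a time average of $D_t\psi=(-i/\hbar)\mathcal{H}_\rho\psi$), together with the norm preservation and interpolation, obtaining $\|i\hbar(I-\sigma)\overline{\partial}_t\psi^m\|_{L^{2,\rho}(\Pi)}+\|\hat v(I-\sigma)\overline{s}_t\psi^m\|_{L^{2,\rho}(\Pi)}\le c|h|^2(\|\mathcal{H}_\rho^2\psi^0\|_{L^{2,\rho}(\Pi)}+\|\psi^0\|_{L^{2,\rho}(\Pi)})$. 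Inserting $F^m=\rho\tilde F^m\in L^2(\Pi)$ (so $\|F^m\|_{L^{2,1/\rho}(\Pi)}=\|\tilde F^m\|_{L^{2,\rho}(\Pi)}$) into Proposition~\ref{prop:1Pi} and using $\sum_m\tau_m^3\le T\tau_{\rm max}^2$ and $\sum_m\tau_m=T$ yields the bound for $\max_m\|\theta^m\|_{L^{2,\rho}(\Pi)}$; adding the projection part gives \eqref{err12}, and the replacement of $\sigma\psi^0$ by $\psi^0$ follows from $\|\psi^0-\sigma\psi^0\|_{L^{2,\rho}(\Pi)}\le c|h|^2(\|\mathcal{H}_\rho^2\psi^0\|_{L^{2,\rho}(\Pi)}+\|\psi^0\|_{L^{2,\rho}(\Pi)})$. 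I expect the main obstacle to be the bookkeeping of the exact power of $\mathcal{H}_\rho$ in each consistency term, so that the $|h|^2$ factor multiplies only $\mathcal{H}_\rho^2\psi^0$ and $\psi^0$ (not $\mathcal{H}_\rho^3\psi^0$) while the $\tau_{\rm max}^2$ factor multiplies only $\mathcal{H}_\rho^3\psi^0$; the underlying unitarity and commutation facts are standard but must be invoked carefully to secure the uniform-in-time control.
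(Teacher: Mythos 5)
Your proposal is correct and takes essentially the same route as the paper: you split the error via the elliptic projection \eqref{proid}, bound the projection part with \eqref{errappr}, feed the consistency residual into the first stability bound of Proposition~\ref{prop:1Pi}, and reduce all right-hand-side norms to $\|\mathcal{H}_\rho^3\psi^0\|_{L^{2,\rho}(\Pi)}$, $\|\mathcal{H}_\rho^2\psi^0\|_{L^{2,\rho}(\Pi)}$ and $\|\psi^0\|_{L^{2,\rho}(\Pi)}$ via the unitary-group identities (the paper's \eqref{solb1}) and the interpolation inequality \eqref{ineq5}. The only, harmless, deviation is that you evaluate \eqref{weak1} pointwise at the two time levels and estimate the Crank--Nicolson truncation $\overline{\partial}_t\psi^m-\overline{s}_tD_t\psi^m$ by a Peano kernel, whereas the paper applies the time average $[\cdot]^m$ to \eqref{weak1} so that $[D_t\psi]^m=\overline{\partial}_t\psi^m$ holds exactly and the whole temporal consistency sits in $([\mathcal{H}\psi]-\overline{s}_t\mathcal{H}\psi)^m$, estimated by \eqref{ineq2b}; both variants require exactly $D_t^3\psi$, i.e.\ $\psi^0\in\mathcal{D}(\mathcal{H}_\rho^3)$, and yield the same $c(1+T)$-uniform bound.
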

\begin{proof}
1. For $y\in L^1(0,T)$, define the average (the projection on the time mesh)
$ [y]^m:=\frac{1}{\tau_m}\int_{t_{m-1}}^{t_m}y(t)dt,\ \ 1\leq m\leq M,$
and notice that
\begin{gather}
 \sum_{m=1}^M\|[u]^m\|_\mathbf{B}\tau_m\leq\int_0^{T}\|u(\cdot,t)\|_\mathbf{B}\,dt,\ \
\label{ineq2a}
\end{gather}
where $\|\cdot\|_\mathbf{B}=\|\cdot\|_{L^{2,\rho}(\Pi)}$ or $\|\cdot\|_{\mathcal{H}+\hat{v}\rho;\,\Pi}$ and $u\in L^1(0,T;\mathbf{B})$,
and
\begin{gather}
 |[y]^m-\overline{s}_ty^m|\leq c_0\tau_m^2[|D_t^2y|]^m,\ \ 1\leq m\leq M,\ \ \text{for}\ \ y\in W^{2,1}(0,T).
\label{ineq2b}
\end{gather}
\par 2. Applying $[\cdot]$ to identity \eqref{weak1}, we get
\begin{gather}
\hspace{-7pt} i\hbar([D_t\psi]^m,\varphi)_{L^{2,\rho}(\Pi)}
 =\mathcal{L}_{\Pi}([\psi]^m,\varphi)\ \ \text{for any}\
 \varphi\in H_0^1(\Pi)\ \text{and}\ 1\leq m\leq M.
\label{weak1avg}
\end{gather}
Then for any $\eta$: $\overline{\omega}^{\,\tau}_M\to S_h(\bar{\Pi})$ from identities \eqref{p43} and \eqref{weak1avg} it follows that
\begin{gather*}
 i\hbar(\overline{\partial}_t(\Psi-\eta)^m,\varphi)_{L^{2,\rho}(\Pi)}-\mathcal{L}_{\Pi}(\overline{s}_t(\Psi-\eta)^m,\varphi)
\nonumber\\[1mm]
 =i\hbar(([D_t\psi]-\overline{\partial}_t\eta)^m,\varphi)_{L^{2,\rho}(\Pi)}
 -\mathcal{L}_{\Pi}(([\psi]-\overline{s}_t\eta)^m,\varphi)
 \ \ \text{for any}\
 \varphi\in S_h(\bar{\Pi}).
\end{gather*}
\par Let $\eta^m:=\sigma\psi^m$, $0\leq m\leq M$. By identity \eqref{proid} and $[D_ty]=\overline{\partial}_ty$ we get
\begin{gather}
 i\hbar(\rho\overline{\partial}_t(\Psi-\sigma\psi)^m,\varphi)_{L^2(\Pi)}-\mathcal{L}_{\Pi}(\overline{s}_t(\Psi-\sigma\psi)^m,\varphi)
\nonumber\\[1mm]
 =i\hbar(\rho([D_t\psi]^m-[D_t\sigma\psi]^m),\varphi)_{L^2(\Pi)}
 -\mathcal{L}_{\Pi}(([\psi]-\overline{s}_t\psi)^m,\varphi)
\nonumber\\[1mm]
 +\hat{v}(\rho\overline{s}_t(\psi-\sigma\psi)^m,\varphi)_{L^2(\Pi)}
 =:(F,\varphi)_{L^2(\Pi)}\ \ \text{for any}\ 1\leq m\leq M,
\label{errid2}
\end{gather}
where (after rearranging the summands)
\begin{gather}
 F=-([\mathcal{H}\psi]-\overline{s}_t\mathcal{H}\psi)
 +i\hbar\rho[D_t(\psi-\sigma\psi)]+\hat{v}\rho\overline{s}_t(\psi-\sigma\psi).
\label{errF}
\end{gather}
\par Let now on $\hbar=1$. Proposition \ref{prop:1Pi} together with \eqref{ineq2a} lead to the bound
\begin{gather*}
 \max_{0\leq m\leq M}\|(\Psi-\sigma\psi)^m\|_{L^{2,\rho}(\Pi)}
 \leq\|\Psi^0-\sigma\psi^0\|_{L^{2,\rho}(\Pi)}
 +2\sum_{m=1}^M \left\|F^m\right\|_{L^{2,1/\rho}(\Pi)}\tau_m
\\
 \leq\|\Psi^0-\sigma\psi^0\|_{L^{2,\rho}(\Pi)}
+2\sum_{m=1}^M\|\bigl([\mathcal{H}_\rho\psi]-\overline{s}_t\mathcal{H}_\rho\psi\bigr)^m\|_{L^{2,\rho}(\Pi)}\tau_m
\\
 +2\int_0^T\|D_t(\psi-\sigma\psi)\|_{L^{2,\rho}(\Pi)}\,dt
 +2|\hat{v}|T\max_{0\leq m\leq M}\|(\psi-\sigma\psi)^m\|_{L^{2,\rho}(\Pi)}.
\end{gather*}
The formula $\psi-\Psi=\psi-\sigma\psi-(\Psi-\sigma\psi)$ and estimates \eqref{errappr} and \eqref{ineq2b} imply
\begin{gather}
 \max_{0\leq m\leq M}\|(\psi-\Psi)^m\|_{L^{2,\rho}(\Pi)}
 \leq\|\Psi^0-\sigma\psi^0\|_{L^{2,\rho}(\Pi)}
\nonumber\\
 +c\Bigl\{\tau_{\rm max}^2\int_0^T\|D_t^2\mathcal{H}_\rho\psi\|_{L^{2,\rho}(\Pi)}\,dt
 +|h|^2\int_0^T\|D_t(\mathcal{H}_\rho+\hat{v})\psi\|_{L^{2,\rho}(\Pi)}\,dt
\nonumber\\
 +(1+|\hat{v}|T)|h|^2\max_{0\leq m\leq M}\|(\mathcal{H}_\rho+\hat{v})\psi^m\|_{L^{2,\rho}(\Pi)}
\Bigr\}.
\label{errest2}
\end{gather}
\par Under the above assumptions, the solution to problem \eqref{eq:se}, \eqref{eq:ic} satisfies the bound
\begin{gather}
 \max_{0\leq t\leq T}\|D_t^k\psi\|_{L^{2,\rho}(\Pi)}\leq\|(D_t^k\psi)|_{t=0}\|_{L^{2,\rho}(\Pi)}
 =\|\mathcal{H}_\rho^k\psi^0\|_{L^{2,\rho}(\Pi)},\ \ 0\leq k\leq 3,
\label{solb1}
\end{gather}
and the property $D_t^k\psi=D_t^{k-l}(-i\mathcal{H}_\rho)^l\psi$ for $1\leq l\leq k$. Therefore
\begin{gather*}
 \max_{0\leq m\leq M}\|(\psi-\Psi)^m\|_{L^{2,\rho}(\Pi)}
\leq\|\Psi^0-\sigma\psi^0\|_{L^{2,\rho}(\Pi)}
 +c\left\{T\tau_{\rm max}^2\|\mathcal{H}_\rho^3\psi^0\|_{L^{2,\rho}(\Pi)}
\right.
\nonumber\\[1mm]
\left.
 +T|h|^2\|\mathcal{H}_\rho^2\psi^0\|_{L^{2,\rho}(\Pi)}
 +(1+|\hat{v}|T)|h|^2\bigl(\|\mathcal{H}_\rho\psi^0\|_{L^{2,\rho}(\Pi)}
 +|\hat{v}|\|\psi^0\|_{L^{2,\rho}(\Pi)}\bigr)
  \right\}.
\end{gather*}
Note that for $\hat{v}=0$ the estimate is simplified.
\par The following multiplicative inequality holds
\begin{gather}
 \|\mathcal{H}_\rho^l\psi^0\|_{L^{2,\rho}(\Pi)}^2
\leq\|\mathcal{H}_\rho^{l+1}\psi^0\|_{L^{2,\rho}(\Pi)}\|\mathcal{H}_\rho^{l-1}\psi^0\|_{L^{2,\rho}(\Pi)}\ \ \text{for}\ \ l=1,2.
\label{ineq5}
\end{gather}
Using \eqref{errappr} for $w=\psi^0$ together with \eqref{ineq5} for $l=1$, we complete the proof.
\qed
\end{proof}
\begin{corollary}
Let $\psi^0(x)=0$ for $|x_1|\geq X_0$,
$\Psi^0\in S_{0h}$ and $\overline{\omega}^{\,\tau}_M$ be uniform.
Then for the solution to \eqref{eq:ii2}, \eqref{eq:icom} the following first error estimate holds
\begin{gather*}
 \max_{0\leq m\leq M}\|(\psi-\Psi)^m\|_{L^{2,\rho}(\Omega)}
 \leq\|\Psi^0-\psi^0\|_{L^{2,\rho}(\Omega)}
\nonumber\\
 +c(1+T)\left\{\tau^2\|\mathcal{H}_\rho^3\psi^0\|_{L^{2,\rho}(\Omega)}
 +|h|^2\bigl(\|\mathcal{H}_\rho^2\psi^0\|_{L^{2,\rho}(\Omega)}
       +\|\psi^0\|_{L^{2,\rho}(\Omega)}\bigr)
 \right\}.
\label{perr12}
\end{gather*}
\end{corollary}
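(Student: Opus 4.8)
The plan is to reduce the assertion on the bounded domain $\Omega$ to the estimate already established on $\Pi$ in Proposition~\ref{prop:1errPi}, exploiting the fact that, under the stated hypotheses, the discrete-TBC problem on $\Omega$ is an exact restriction of the full method on $\Pi$. First I would invoke Proposition~\ref{Zlotnik_mini4:th:pk}: since $\Psi^0\in S_{0h}$ and $\overline{\omega}^{\,\tau}_M$ is uniform, the restriction $\Psi|_{\bar{\Omega}}$ of the unique solution of \eqref{p43} on $\Pi$ satisfies \eqref{eq:ii2}, \eqref{eq:icom}. By the uniqueness asserted in Proposition~\ref{prop:1}, the solution of \eqref{eq:ii2}, \eqref{eq:icom} therefore coincides with this restriction, while the exact solution on $\Omega$ is just the restriction of the exact $\psi$ on $\Pi$. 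Hence $(\psi-\Psi)|_{\bar{\Omega}}$ is the restriction of $(\psi-\Psi)$ on $\Pi$, and since $\Omega\subset\Pi$ and $\rho\geq\underline{\rho}>0$,
\[
 \|(\psi-\Psi)^m\|_{L^{2,\rho}(\Omega)}\leq\|(\psi-\Psi)^m\|_{L^{2,\rho}(\Pi)}.
\]
Taking the maximum over $m$ and applying Proposition~\ref{prop:1errPi} (with $\sigma\psi^0$ replaced by $\psi^0$ and $\tau_{\rm max}=\tau$ on the uniform mesh) then bounds the left-hand side by the right-hand side of \eqref{err12}, but with all norms taken over $\Pi$.

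The remaining, and only genuinely nontrivial, step is to replace each $\Pi$-norm on the right by the corresponding $\Omega$-norm. Here I would use the locality of $\mathcal{H}$: because $\psi^0(x)=0$ for $|x_1|\geq X_0$ and $\psi^0\in\mathcal{D}(\mathcal{H}_\rho^3)\subset H^2(\Pi)$, the differential operator $\mathcal{H}_\rho=\frac1\rho\bigl(-\frac{\hbar^{\,2}}{2}\dv(B\nabla\cdot)+V\cdot\bigr)$ does not enlarge the essential support, so $\mathcal{H}_\rho^k\psi^0=0$ a.e.\ on $\{|x_1|\geq X_0\}$ for $k=0,1,2,3$. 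Since $X_0<X_1$, each $\mathcal{H}_\rho^k\psi^0$ vanishes a.e.\ on $\Pi\setminus\Omega$, whence $\|\mathcal{H}_\rho^k\psi^0\|_{L^{2,\rho}(\Pi)}=\|\mathcal{H}_\rho^k\psi^0\|_{L^{2,\rho}(\Omega)}$. The initial term is handled the same way: $\Psi^0\in S_{0h}$ is supported in $\Omega_0\subset\Omega$ and $\psi^0$ in $\{|x_1|\leq X_0\}\subset\Omega$, so $\|\Psi^0-\psi^0\|_{L^{2,\rho}(\Pi)}=\|\Psi^0-\psi^0\|_{L^{2,\rho}(\Omega)}$. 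Substituting these identities turns the $\Pi$-estimate into exactly the bound claimed in the corollary.

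I expect the main obstacle to be the careful justification that $\mathcal{H}_\rho^k\psi^0$ inherits the support condition, i.e.\ that iterating the abstract operator $\mathcal{H}_\rho$ agrees, on $\mathcal{D}(\mathcal{H}_\rho^3)$, with iterating the local differential operator and hence never spreads support into $|x_1|>X_0$; once this is settled the rest is a direct substitution. Notably, no stability argument needs to be redone on $\Omega$, since the entire quantitative content is inherited from the $\Pi$-problem through Propositions~\ref{Zlotnik_mini4:th:pk} and~\ref{prop:1errPi}.
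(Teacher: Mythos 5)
Your proof is correct and fills in exactly the steps the paper leaves implicit behind its one-line ``immediately follows'': the restriction property of Proposition~\ref{Zlotnik_mini4:th:pk} combined with the uniqueness in Proposition~\ref{prop:1} identifies $\Psi$ with the restriction of the full-domain solution, and the locality of $\mathcal{H}_\rho$ on $\mathcal{D}(\mathcal{H}_\rho^3)\subset H^2(\Pi)$ keeps the supports of $\mathcal{H}_\rho^k\psi^0$ inside $\{|x_1|\leq X_0\}\subset\Omega$, so all $\Pi$-norms in \eqref{err12} coincide with the stated $\Omega$-norms. This is essentially the same approach as the paper's, just made explicit, including the correct handling of the only delicate point (that iterating the abstract operator does not spread the support).
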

\par The result immediately follows from Proposition \ref{prop:1errPi}.
Notice also that, for $1\leq n\leq 3$, for the interpolant $s\psi^0$ in $S_{0h}$ for $\psi^0$, the following error estimate holds
\[
 \|s\psi^0-\psi^0\|_{L^{2,\rho}(\Omega)}
\leq c|h|^2\bigl(\|\mathcal{H}_\rho\psi^0\|_{L^{2,\rho}(\Omega)}+\|\psi^0\|_{L^{2,\rho}(\Omega)}\bigr),
\]
thus one can set simply $\Psi^0:=s\psi^0$. Other possible choices of $\Psi^0$ with the same error estimate, for any $n\geq 1$, are the $L^2(\Omega_0)$ (possibly with a weight like $\rho$) projection of $\psi^0$ onto $S_{0h}$ or its elliptic projection onto $S_{0h}$ like \eqref{proid} (with $\Pi$ replaced by $\Omega_0$ and any $\varphi\in S_{0h}$).
\begin{remark}
Importantly, all the above results can be essentially generalized rather easily. For example, in the 2D case, the problem in an unbounded domain $\Pi$ of general shape with smooth boundary and several half-strip-like outlets to infinity can be treated by using combined linear triangle elements inside $\Pi$ except outlets and bilinear rectangular elements inside outlets.
\end{remark}
\vskip 1pt
\par Let $B\in W^{2,\infty}(\Pi)$ and $\rho,V\in W^{1,\infty}(\Pi)$.
\begin{proposition}
Let $\mathcal{H}_\rho^3\psi^0\in H_0^1(\Pi)$, $1\leq n\leq 3$ and $\Psi^0:=s\psi^0$.
Then the following second error estimate holds
\begin{gather}
\max_{0\leq m\leq M}\|s\psi^m-\Psi^m\|_{\mathcal{H}+\hat{v}\rho;\,\Pi}
 \leq c(1+T)\Bigl\{\tau_{\rm max}^2\bigl(\|\mathcal{H}_\rho^3\psi^0\|_{\mathcal{H}+\hat{v}\rho;\,\Pi}
 +\|\mathcal{H}_\rho^2\psi^0\|_{\mathcal{H}+\hat{v}\rho;\,\Pi}\bigr)
\nonumber\\
 +|h|^2\bigl(\|\mathcal{H}_\rho^3\psi^0\|_{L^{2,\rho}(\Pi)}+\|\psi^0\|_{L^{2,\rho}(\Pi)}\bigr)
 \Bigr\}.
\label{p22}
\end{gather}
Here $s\psi^m$ is the interpolant in $S_h(\bar{\Pi})$ for $\psi^m$, $0\leq m\leq M$.
\label{prop:2errPi}
\end{proposition}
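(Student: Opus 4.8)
The plan is to follow the proof of Proposition~\ref{prop:1errPi} step by step, but to invoke the second stability bound of Proposition~\ref{prop:2Pi} in place of the first one and to compare $\Psi$ with the interpolant $s\psi$ rather than with $\psi$. As there I set $\hbar=1$ and keep the elliptic projection $\sigma\psi$ as auxiliary function, so that the error equation \eqref{errid2}--\eqref{errF} for $\Psi-\sigma\psi$ is available unchanged (now on the uniform time mesh, with $\Psi^0=s\psi^0$). I would then use the splitting
\[
 s\psi^m-\Psi^m=(s\psi^m-\sigma\psi^m)-(\Psi^m-\sigma\psi^m),
\]
bounding $\|\Psi^m-\sigma\psi^m\|_{\mathcal H+\hat v\rho;\,\Pi}$ by Proposition~\ref{prop:2Pi} applied to \eqref{errid2} and $\|s\psi^m-\sigma\psi^m\|_{\mathcal H+\hat v\rho;\,\Pi}$ by a superconvergence estimate. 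Two tools are used throughout: the embedding $\|\varphi\|_{L^{2,\rho}(\Pi)}\le\hat\delta^{-1/2}\|\varphi\|_{\mathcal H+\hat v\rho;\,\Pi}$ from \eqref{p20inf}, which gives the crude bound $\|\rho u\|_{H_h^{-1}(\Pi)}\le c\|u\|_{L^{2,\rho}(\Pi)}$, and the sharper dual bound $\|\rho\mathcal H_\rho u\|_{H_h^{-1}(\Pi)}\le c\|u\|_{\mathcal H+\hat v\rho;\,\Pi}$ for $u\in H^2(\Pi)\cap H_0^1(\Pi)$, which follows from $\langle\rho\mathcal H_\rho u,\varphi\rangle_\Pi=\mathcal L_\Pi(u,\varphi)$ and Cauchy--Schwarz for the energy inner product.

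For the stability part I feed $F$ from \eqref{errF} into the bound of Proposition~\ref{prop:2Pi}. Writing the time-consistency part as $-([\mathcal H\psi]-\overline s_t\mathcal H\psi)=-\rho\mathcal H_\rho e$ with $e:=[\psi]-\overline s_t\psi$ (as $\mathcal H_\rho$ commutes with the time operators), the sharp dual bound gives $\|\rho\mathcal H_\rho e^m\|_{H_h^{-1}(\Pi)}\le c\|e^m\|_{\mathcal H+\hat v\rho;\,\Pi}$ and $\|\overline\partial_t(\rho\mathcal H_\rho e)^m\|_{H_h^{-1}(\Pi)}\le c\|\overline\partial_t e^m\|_{\mathcal H+\hat v\rho;\,\Pi}$. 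The Peano-kernel bound \eqref{ineq2b} and its time-difference, now in the Banach space with norm $\|\cdot\|_{\mathcal H+\hat v\rho;\,\Pi}$, yield $\|e^m\|_{\mathcal H+\hat v\rho;\,\Pi}\le c\tau^2\max_t\|D_t^2\psi\|_{\mathcal H+\hat v\rho;\,\Pi}$ and $\|\overline\partial_t e^m\|_{\mathcal H+\hat v\rho;\,\Pi}\le c\tau^2\max_t\|D_t^3\psi\|_{\mathcal H+\hat v\rho;\,\Pi}$; combined with $D_t^k\psi=(-i)^k\mathcal H_\rho^k\psi$, the conservation law $\|\mathcal H_\rho^k\psi(t)\|_{\mathcal H+\hat v\rho;\,\Pi}=\|\mathcal H_\rho^k\psi^0\|_{\mathcal H+\hat v\rho;\,\Pi}$ (valid because $(\mathcal H_\rho+\hat v)^{1/2}$ commutes with the unitary group $e^{-it\mathcal H_\rho}$), and \eqref{ineq2a}, this produces the two $\tau^2$-terms: the factor $\tfrac{|\hat v|}{\hbar}\|F^m\|_{H_h^{-1}}$ gives $c\tau^2T\|\mathcal H_\rho^2\psi^0\|_{\mathcal H+\hat v\rho;\,\Pi}$ and $\|\overline\partial_tF^m\|_{H_h^{-1}}$ gives $c\tau^2T\|\mathcal H_\rho^3\psi^0\|_{\mathcal H+\hat v\rho;\,\Pi}$. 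This is exactly where the hypothesis $\mathcal H_\rho^3\psi^0\in H_0^1(\Pi)$ is needed, to make the energy norm of $D_t^3\psi=-i\mathcal H_\rho^3\psi$ finite.

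The elliptic-projection part of $F$, namely $i\rho(I-\sigma)\overline\partial_t\psi+\hat v\rho(I-\sigma)\overline s_t\psi$ (recall $[D_t\psi]=\overline\partial_t\psi$), I bound in $H_h^{-1}(\Pi)$ by the crude $c\|\cdot\|_{L^{2,\rho}(\Pi)}$ and then by the projection estimate \eqref{errappr}. Its time difference brings in $(I-\sigma)\overline\partial_t^2\psi$ with $\|\overline\partial_t^2\psi^m\|\le c\max_t\|D_t^2\psi\|$; using $D_t^2\psi=-\mathcal H_\rho^2\psi$ and \eqref{solb1} this yields the leading term $c|h|^2\|\mathcal H_\rho^3\psi^0\|_{L^{2,\rho}(\Pi)}$, the remaining pieces carrying fewer $\mathcal H_\rho$-powers and being absorbed via interpolation (cf. \eqref{ineq5}) into $c|h|^2(\|\mathcal H_\rho^3\psi^0\|_{L^{2,\rho}(\Pi)}+\|\psi^0\|_{L^{2,\rho}(\Pi)})$. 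The datum $\|F^0\|_{H_h^{-1}(\Pi)}$ and the initial contribution $\|(\Psi-\sigma\psi)^0\|_{\mathcal H+\hat v\rho;\,\Pi}=\|s\psi^0-\sigma\psi^0\|_{\mathcal H+\hat v\rho;\,\Pi}$ are of the same respective $|h|^2$ type.

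The approximation term $\max_m\|s\psi^m-\sigma\psi^m\|_{\mathcal H+\hat v\rho;\,\Pi}$ is the only genuinely new ingredient and, I expect, the main obstacle. Using orthogonality \eqref{proid} and $s\psi-\sigma\psi\in S_h(\bar\Pi)$ I would reduce it to
\[
 \|s\psi-\sigma\psi\|_{\mathcal H+\hat v\rho;\,\Pi}^2=\mathcal L_\Pi(s\psi-\psi,s\psi-\sigma\psi)+\hat v(s\psi-\psi,s\psi-\sigma\psi)_{L^{2,\rho}(\Pi)},
\]
so that everything rests on the polylinear superconvergence estimate $|\mathcal L_\Pi(sw-w,\chi)|\le c|h|^2\|w\|_{H^3(\Pi)}\|\chi\|_{\mathcal H+\hat v\rho;\,\Pi}$ for $\chi\in S_h(\bar\Pi)$, valid on the rectangular mesh for $1\le n\le3$, together with the routine $L^2$ interpolation bound for the $\hat v$-term. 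This estimate is what forces the dimensional restriction $1\le n\le3$ and the strengthened smoothness $B\in W^{2,\infty}(\Pi)$, $\rho,V\in W^{1,\infty}(\Pi)$; its proof rests on the elementwise orthogonality of the gradient of the polylinear interpolation error tested against $\nabla\chi$, combined with the smoothness of $B$. Finally, via the elliptic regularity bound $\|\psi\|_{H^3(\Pi)}\le c\|\mathcal H_\rho\psi\|_{\mathcal H+\hat v\rho;\,\Pi}=c\|\mathcal H_\rho\psi^0\|_{\mathcal H+\hat v\rho;\,\Pi}$ and the conservation law, this term is of strictly lower order than the stability contributions and is absorbed, completing the proof.
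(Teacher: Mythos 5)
Your proposal is correct and follows essentially the same route as the paper's own proof: the error identity \eqref{errid2} for $\Psi-\sigma\psi$ fed into the second stability bound of Proposition~\ref{prop:2Pi}, the splitting of $F$ from \eqref{errF} into a time-consistency part, bounded in $H_h^{-1}(\Pi)$ via the dual estimate \eqref{p26err} (your ``sharper dual bound'') and Peano-kernel estimates in the energy norm, and an elliptic-projection part, bounded via \eqref{p24err} and \eqref{errappr}, then the conservation laws with $D_t^k\psi=(-i\mathcal{H}_\rho)^k\psi$ and the interpolation inequalities \eqref{ineq5}, and finally the comparison of $s\psi$ with $\sigma\psi$ by an $O(|h|^2)$ elliptic superconvergence estimate, which is also exactly where the hypotheses $\mathcal{H}_\rho^3\psi^0\in H_0^1(\Pi)$ and $1\leq n\leq 3$ enter in the paper. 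The only cosmetic deviations are that you bound the consistency terms by maxima in time where the paper uses the integral-form averages and identities \eqref{p28err}--\eqref{p34err}, and you sketch the standard orthogonality/duality proof of the estimate $\|sw-\sigma w\|_{\mathcal{H}+\hat{v}\rho;\,\Pi}\leq c|h|^2(\|\mathcal{H}_\rho w\|_{\mathcal{H}+\hat{v}\rho;\,\Pi}+\|w\|_{\mathcal{H}+\hat{v}\rho;\,\Pi})$, which the paper instead imports from \cite{ZSMD83,ZNMA94,ZCMAM02}.
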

\begin{proof}
1. Let first $n\geq 1$.
Inequality \eqref{p20inf} implies
\begin{gather}
 \|\rho w\|_{H_h^{-1}(\Pi)}\leq\hat{\delta}^{-1/2}\|w\|_{L^{2,\rho}(\Pi)},\ \
 \|w\|_{L^{2,\rho}(\Pi)}\leq\hat{\delta}^{-1/2}\|w\|_{\mathcal{H}+\hat{v}\rho;\,\Pi}.
\label{p24err}
\end{gather}
Then setting $\hat{c}:=1+\frac{|\hat{v}|}{\hat{\delta}}$, we also get
\begin{gather}
 \|\mathcal{H}w\|_{H_h^{-1}(\Pi)}
 \leq\|(\mathcal{H}+\hat{v}\rho)w\|_{H_h^{-1}(\Pi)}+|\hat{v}|\|\rho w\|_{H_h^{-1}(\Pi)}
 \leq \hat{c}\|w\|_{\mathcal{H}+\hat{v}\rho;\,\Pi}.
\label{p26err}
\end{gather}
\par For $y\in L^1(0,T)$, define two more averages (projections on the time mesh)
\begin{gather*}
 [y]_2^m:=\frac{1}{2\hat{\tau}_m}\int_{t_{m-1}}^{t_{m+1}}y(t)dt,\ \
 \langle y\rangle^m:=\frac{1}{\hat{\tau}_m}\int_{t_{m-1}}^{t_{m+1}}y(t)e_m(t)dt,\ \ 1\leq m\leq M-1,
\\
 [y]_2^0:=[y]^1,\ \
 \langle y\rangle^0:=\frac{2}{\tau_1}\int_0^{t_1}y(t)e_0(t)dt,
\end{gather*}
where $e_m(t)$ is the ``hat'' function linear on all segments $[t_{l-1},t_l]$ and such that
$e_m(t_m)=1$ and $e_m(t_l)=0$ for all $l\neq m$.
\vskip 1pt
\par Notice that the following relations hold
\begin{gather}
 \hat{\partial}_t[y]^m=\langle D_ty\rangle^m,\ \ (\hat{\partial}_t\overline{s}_ty)^m=[D_ty]_2^m,\ \ 1\leq m\leq M-1,
\label{p28err}\\
 [y]^1-y(0)=\frac{\tau_1}{2}\langle D_ty\rangle^0,\ \ (\overline{s}_ty)^1-y(0)=\frac{\tau_1}{2}[D_ty]^1,\ \ \text{for}\ \
 y\in W^{1,1}(0,T),
\label{p30err}\\
\sum_{m=0}^{M-1}\|\langle u\rangle^m\|\hat{\tau}_m\leq\int_0^T\|u(\cdot,t)\|\,dt,\ \
 \sum_{m=0}^{M-1}\|[u]_2^m\|\hat{\tau}_m\leq\int_0^T\|u(\cdot,t)\|\,dt,
\label{p32err}
\end{gather}
where $\|\cdot\|=\|\cdot\|_{L^{2,\rho}(\Pi)}$ and $u\in L^{2,1}(\Pi\times (0,T))$, and
\begin{gather}
 |\langle y\rangle^m-[y]_2^m|\leq c_0\tau_m^2[|D_t^2y|]_2^m,\ \ 1\leq m\leq M-1,\ \ \text{for}\ \ y\in W^{2,1}(0,T).
\label{p34err}
\end{gather}
\smallskip\par 2. Let first $\Psi^0\in S_h(\bar{\Pi})$ be arbitrary. We go back to the error identity \eqref{errid2}.
Applying Proposition \ref{prop:2Pi}, now we get
\begin{gather}
\max_{0\leq m\leq M}\|(\Psi-\sigma\psi)^m\|_{\mathcal{H}+\hat{v}\rho;\,\Pi}
 \leq \|\Psi^0-\sigma\psi^0\|_{{\mathcal H}+\hat{v}\rho;\,\Pi}
\nonumber\\
 +4\sum_{m=0}^{M-1}\| \hat{\partial}_tF^m\|_{H_h^{-1}(\Pi)}\hat{\tau}_m
 +4|\hat{v}|\sum_{m=1}^M\|F^m\|_{H_h^{-1}(\Pi)}\tau_m
 +4\|F^0\|_{H_h^{-1}(\Pi)},
\label{p22err}
\end{gather}
where the right-hand side is slightly transformed and $F$ is given by \eqref{errF}. We introduce the decomposition
\[
 F=F_\tau+\rho F_h,\ \
 F_\tau:=-([\mathcal{H}\psi]-\overline{s}_t\mathcal{H}\psi),\ \ F_h:=i[D_t(\psi-\sigma\psi)]+\hat{v}\overline{s}_t(\psi-\sigma\psi)
\]
as well as set $F_\tau^0:=0$ and $F_h^0=i D_t(\psi-\sigma\psi)|_{t=0}+\hat{v}(\psi^0-\sigma\psi^0)$.
\vskip 2pt
\par Applying sequentially relations \eqref{p26err}, \eqref{p28err}, \eqref{ineq2b}, \eqref{p34err}, \eqref{ineq2a} and \eqref{p32err}, we obtain
\begin{gather*}
 S_\tau:=|\hat{v}|\sum_{m=1}^M\|F_\tau^m\|_{H_h^{-1}(\Pi)}\tau_m
 +\sum_{m=0}^{M-1}\|\hat{\partial}_t F_\tau^m\|_{H_h^{-1}(\Pi)}\hat{\tau}_m
\nonumber\\
 \leq \hat{c}\Bigl(|\hat{v}|\sum_{m=1}^M\|([\psi]-\overline{s}_t\psi)^m\|_{\mathcal{H}+\hat{v}\rho;\,\Pi}\tau_m
 +\sum_{m=1}^{M-1}\|(\langle D_t\psi\rangle-[D_t\psi]_2)^m\|_{\mathcal{H}+\hat{v}\rho;\,\Pi}\hat{\tau}_m
\nonumber\\
 +\|([\psi]-\overline{s}_t\psi)^1\|_{\mathcal{H}+\hat{v}\rho;\,\Pi}\Bigr)
\nonumber\\
 \leq \hat{c}c_0\tau_{\rm max}^2\Bigl(|\hat{v}|\sum_{m=1}^M\|[|D_t^2\psi|]^m\|_{\mathcal{H}+\hat{v}\rho;\,\Pi}\tau_m
 +\sum_{m=1}^{M-1}\|[|D_t^3\psi|]_2^m\|_{\mathcal{H}+\hat{v}\rho;\,\Pi}\hat{\tau}_m
\nonumber\\
 +\|[|D_t^2\psi|]^1\|_{\mathcal{H}+\hat{v}\rho;\,\Pi}\Bigr)
\end{gather*}
\begin{gather}
 \leq \hat{c}c_0\tau_{\rm max}^2\Bigl\{\|(D_t^2\psi)|_{t=0}\|_{\mathcal{H}+\hat{v}\rho;\,\Pi}
 +\int_0^T\bigl(|\hat{v}|\|D_t^2\psi\|_{\mathcal{H}+\hat{v}\rho;\,\Pi}
 +2\|D_t^3\psi\|_{\mathcal{H}+\hat{v}\rho;\,\Pi}\bigr)\,dt\Bigr\}.
\label{p36err}
\end{gather}
\par The left inequality \eqref{p24err} implies
\begin{gather}
 S_h:=
 |\hat{v}|\sum_{m=1}^M\|\rho F_h^m\|_{H_h^{-1}(\Pi)}\tau_m
 +\|\rho F_h^0\|_{H_h^{-1}(\Pi)}
 +\sum_{m=0}^{M-1}\|\rho \hat{\partial}_tF_h^m\|_{H_h^{-1}(\Pi)}\hat{\tau}_m
\nonumber\\
 \leq \hat{\delta}^{-1/2}\Bigl(|\hat{v}|\sum_{m=1}^M\|F_h^m\|_{L^{2,\rho}(\Pi)}\tau_m
 +\|F_h^0\|_{L^{2,\rho}(\Pi)}
 +\sum_{m=0}^{M-1}\|\hat{\partial}_tF_h^m\|_{L^{2,\rho}(\Pi)}\hat{\tau}_m\Bigr),
\label{p38err}
\end{gather}
and further the error estimate \eqref{errappr} leads to
\begin{gather}
 \|F_h^0\|_{L^{2,\rho}(\Pi)}
 \leq \|D_t(\psi-\sigma\psi)|_{t=0}\|_{L^{2,\rho}(\Pi)}+|\hat{v}|\|\psi^0-\sigma\psi^0\|_{L^{2,\rho}(\Pi)}
\nonumber\\
 \leq c|h|^2\bigl(\|D_t(\mathcal{H}_\rho+\hat{v})\psi|_{t=0}\|_{L^{2,\rho}(\Pi)}
 +|\hat{v}|\|(\mathcal{H}_\rho+\hat{v})\psi^0\|_{L^{2,\rho}(\Pi)}\bigr).
\label{p40err}
\end{gather}
Applying sequentially relations \eqref{p28err}, \eqref{p30err}, \eqref{errappr} and \eqref{p32err}, we also obtain
\begin{gather}
 \sum_{m=0}^{M-1}\|\hat{\partial}_tF_h^m\|_{L^{2,\rho}(\Pi)}\hat{\tau}_m
\nonumber\\
 \leq\sum_{m=0}^{M-1}\bigl(\|\langle D_t^2(\psi-\sigma\psi)\rangle^m\|_{L^{2,\rho}(\Pi)}
 +|\hat{v}|\|[D_t(\psi-\sigma\psi)]_2^m\|_{L^{2,\rho}(\Pi)}\bigr)\hat{\tau}_m
\nonumber\\
 \leq c|h|^2\sum_{m=0}^{M-1}\bigl(\|\langle D_t^2(\mathcal{H}_\rho+\hat{v})\psi\rangle^m\|_{L^{2,\rho}(\Pi)}
 +|\hat{v}|\|[D_t(\mathcal{H}_\rho+\hat{v})\psi]_2^m\|_{L^{2,\rho}(\Pi)}\bigr)\hat{\tau}_m
\nonumber\\
 \leq c|h|^2\int_0^T\bigl(\|D_t^2(\mathcal{H}_\rho+\hat{v})\psi\|_{L^{2,\rho}(\Pi)}
 +|\hat{v}|\|D_t(\mathcal{H}_\rho+\hat{v})\psi\|_{L^{2,\rho}(\Pi)}\bigr)\,dt.
\label{p42err}
\end{gather}
Inserting into \eqref{p22err} all the estimates \eqref{p36err}-\eqref{p42err} together with the estimate for $\sum_{m=1}^M\|F_h^m\|_{L^{2,\rho}(\Pi)}\tau_m$ used in the preceding proof in \eqref{errest2}, we derive
\begin{gather*}
 \max_{0\leq m\leq M}\|(\Psi-\sigma\psi)^m\|_{\mathcal{H}+\hat{v}\rho;\,\Pi}
 \leq \|\Psi^0-\sigma\psi^0\|_{{\mathcal H}+\hat{v}\rho;\,\Pi}+ 4S_\tau+4S_h
\\
 \leq c\tau_{\rm max}^2\Bigl\{\|(D_t^2\psi)|_{t=0}\|_{\mathcal{H}+\hat{v}\rho;\,\Pi}
 +\int_0^T\bigl(|\hat{v}|\|D_t^2\psi\|_{\mathcal{H}+\hat{v}\rho;\,\Pi}
 +\|D_t^3\psi\|_{\mathcal{H}+\hat{v}\rho;\,\Pi}\bigr)\,dt\Bigr\}
\\
 +c|h|^2\Bigl\{|\hat{v}|\int_0^T\|D_t(\mathcal{H}_\rho+\hat{v})\psi\|_{L^{2,\rho}(\Pi)}\,dt
 +\hat{v}^2T\max_{0\leq t\leq T}\|(\mathcal{H}_\rho+\hat{v})\psi^m\|_{L^{2,\rho}(\Pi)}
\\
 +\|D_t(\mathcal{H}_\rho+\hat{v})\psi|_{t=0}\|_{L^{2,\rho}(\Pi)}
 +|\hat{v}|\|(\mathcal{H}_\rho+\hat{v})\psi^0\|_{L^{2,\rho}(\Pi)}
\\
 +\int_0^T\bigl(\|D_t^2(\mathcal{H}_\rho+\hat{v})\psi\|_{L^{2,\rho}(\Pi)}
 +|\hat{v}|\|D_t(\mathcal{H}_\rho+\hat{v})\psi\|_{L^{2,\rho}(\Pi)}\bigr)\,dt\Bigr\}.
\end{gather*}
Once again for $\hat{v}=0$ the estimate is essentially simplified.
\par Under all the above assumptions, the solution to problem \eqref{eq:se}, \eqref{eq:ic} satisfies the bound
\begin{gather*}
 \max_{0\leq t\leq T}\|D_t^k\psi\|_{\mathcal{H}+\hat{v}\rho;\,\Pi}\leq\|(D_t^k\psi)|_{t=0}\|_{\mathcal{H}+\hat{v}\rho;\,\Pi}
 =\|\mathcal{H}_\rho^k\psi^0\|_{\mathcal{H}+\hat{v}\rho;\,\Pi},\ \ 0\leq k\leq 3.
\end{gather*}
This bound and \eqref{solb1} and the property $D_t^k\psi=D_t^{k-l}(-i\mathcal{H}_\rho)^l\psi$, $1\leq l\leq k$,
imply
\begin{gather}
 \max_{0\leq m\leq M}\|(\Psi-\sigma\psi)^m\|_{\mathcal{H}+\hat{v}\rho;\,\Pi}
 \leq \|\Psi^0-\sigma\psi^0\|_{{\mathcal H}+\hat{v}\rho;\,\Pi}
\nonumber\\
 \leq c(1+T)\Bigl\{\tau_{\rm max}^2\bigl(\|\mathcal{H}_\rho^2\psi^0\|_{\mathcal{H}+\hat{v}\rho;\,\Pi}
 +\|\mathcal{H}_\rho^3\psi^0\|_{\mathcal{H}+\hat{v}\rho;\,\Pi}\bigr)
 +|h|^2\sum_{k=0}^3\|\mathcal{H}_\rho^k\psi^0\|_{L^{2,\rho}(\Pi)}
 \Bigr\}.
\label{p44err}
\end{gather}
\par 3. Let now $1\leq n\leq 3$ and $\Psi^0=s\psi^0$. Similarly to \cite{ZNMA94}, Lemma 5.1 for $n=1$ and
\cite{ZCMAM02}, Theorem 2.1 for $n=2$ and 3 (see also \cite{ZSMD83}), the following elliptic FEM error estimate holds
\begin{gather*}
 \|sw-\sigma w\|_{\mathcal{H}+\hat{v}\rho;\,\Pi}
 \leq c|h|^2\Bigl(\sum_{p\neq q}\|D_p^2D_qw\|_{L^2(\Pi)}+\sum_{p=1}^n\|D_p^2w\|_{L^2(\Pi)}\Bigr)
\\
 \leq c_1|h|^2\bigl(\|\mathcal{H}_\rho w\|_{\mathcal{H}+\hat{v}\rho;\,\Pi}+\|w\|_{\mathcal{H}+\hat{v}\rho;\,\Pi}\bigr)\ \
\end{gather*}
for any $w\in\mathcal{D}(\mathcal{H}_\rho)$ such that $\mathcal{H}_\rho w\in H_0^1(\Pi)$,
taking into account the above regularity assumptions on $B,\rho$ and $V$, where the first sum is taken over all $p$ and $q$ from 1 to $n$ excluding $p=q$ and disappears for $n=1$.
This estimate allows to pass from \eqref{p44err} to the final estimate \eqref{p22} by the triangle inequality together with inequalities \eqref{ineq5} and
\[
 \|\mathcal{H}_\rho^l\psi^0\|_{\mathcal{H}+\hat{v}\rho;\,\Pi}^2
 \leq \|(\mathcal{H}_\rho+\hat{v})\mathcal{H}_\rho^l\psi^0\|_{L^{2,\rho}(\Pi)}\|\mathcal{H}_\rho^l\psi^0\|_{L^{2,\rho}(\Pi)},\ \ l=0,1.
\ \ \ \ \ \ \ \ \qed
\]
\end{proof}
\begin{corollary}
Let $\psi^0(x)=0$ for $|x_1|\geq X_0$, $1\leq n\leq 3$ and $\Psi^0=s\psi^0$
on $\bar{\Omega}$
and $\overline{\omega}^{\,\tau}_M$ be uniform.
Then for the solution to \eqref{eq:ii2}, \eqref{eq:icom} the following second error estimate holds
\begin{gather*}
\max_{0\leq m\leq M}\|s\psi^m-\Psi^m\|_{\mathcal{H}+\hat{v}\rho;\,\Omega}
 \leq c(1+T)\Bigl\{\tau^2\bigl(\|\mathcal{H}_\rho^3\psi^0\|_{\mathcal{H}+\hat{v}\rho;\,\Omega}
 +\|\mathcal{H}_\rho^2\psi^0\|_{\mathcal{H}+\hat{v}\rho;\,\Omega}\bigr)
\nonumber\\
 +|h|^2\bigl(\|\mathcal{H}_\rho^3\psi^0\|_{L^{2,\rho}(\Omega)}+\|\psi^0\|_{L^{2,\rho}(\Omega)}\bigr)
 \Bigr\}.
\end{gather*}
\end{corollary}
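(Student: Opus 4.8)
The plan is to derive the Corollary from Proposition~\ref{prop:2errPi} in the same spirit as the first error estimate on $\bar\Omega$ was obtained from Proposition~\ref{prop:1errPi}: I would transfer the estimate valid on $\Pi$ to its restriction on $\Omega$, exploiting that both the data $\psi^0$ and the coefficients are trivial outside $\Omega$.

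First I would verify the hypotheses of Proposition~\ref{prop:2errPi} (the standing regularity, in particular $\mathcal H_\rho^3\psi^0\in H_0^1(\Pi)$, being retained) and that the two discrete solutions agree on $\bar\Omega$. Since $\psi^0=0$ for $|x_1|\ge X_0$ and the mesh is uniform with step $h_1$ outside $[-X_1+h_1,X_1-h_1]\supset[-X_0,X_0]$, every node with $|x_1|\ge X_1-h_1$ carries the value $0$, hence $\Psi^0=s\psi^0\in S_{0h}$. With the time mesh uniform, Proposition~\ref{Zlotnik_mini4:th:pk} then shows that the restriction $\Psi|_{\bar\Omega}$ of the full Crank--Nicolson--polylinear FEM solution satisfies \eqref{eq:ii2}, \eqref{eq:icom}, and by the uniqueness part of Proposition~\ref{prop:2} it is precisely the solution named in the Corollary. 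As nodal interpolation is local, $s\psi^m|_{\bar\Omega}$ is the restriction to $\bar\Omega$ of the interpolant on $\bar\Pi$, so $(s\psi-\Psi)^m$ on $\bar\Omega$ is the restriction of the corresponding error on $\bar\Pi$.

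Next I would compare the energy norms. For $w\in H_0^1(\Pi)$ the difference $\|w\|_{\mathcal H+\hat v\rho;\,\Pi}^2-\|w\|_{\mathcal H+\hat v\rho;\,\Omega}^2$ equals the integral over $\Pi\setminus\Omega$ of $\frac{\hbar^2}{2}\sum_k B_{k\infty}|D_kw|^2+(V_\infty+\hat v\rho_\infty)|w|^2$, after freezing the coefficients by \eqref{eq:s21} since $|x_1|\ge X_1>X_0$ there. As $B_{k\infty}>0$ and $\hat v\ge -V_\infty/\rho_\infty$ gives $V_\infty+\hat v\rho_\infty\ge0$, this integrand is pointwise nonnegative, so $\|w\|_{\mathcal H+\hat v\rho;\,\Omega}\le\|w\|_{\mathcal H+\hat v\rho;\,\Pi}$. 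Applying this to $w=(s\psi-\Psi)^m$ bounds the left-hand side of the Corollary by that of the estimate of Proposition~\ref{prop:2errPi}.

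Finally I would match the right-hand sides. Since $\mathcal H$ is a local differential operator whose coefficients are constant for $|x_1|\ge X_0$, the support condition propagates: each $\mathcal H_\rho^k\psi^0$, $0\le k\le3$, again vanishes for $|x_1|\ge X_0$ and so is supported in $\Omega_0\subset\Omega$. Hence the $L^{2,\rho}(\Pi)$ and $\|\cdot\|_{\mathcal H+\hat v\rho;\,\Pi}$ norms of these quantities coincide with their counterparts over $\Omega$, and with $\tau_{\rm max}=\tau$ the right-hand side of Proposition~\ref{prop:2errPi} becomes exactly that of the Corollary. The step I expect to be the crux is the norm domination: it is the only place where the geometry enters, and it relies precisely on the coefficients being frozen outside $\Omega$ together with the sign condition $\hat v\ge -V_\infty/\rho_\infty$, which makes the discarded boundary integral nonnegative; the support-propagation claim needs only the locality of $\mathcal H$ and the regularity $\psi^0\in\mathcal D(\mathcal H_\rho^3)$.
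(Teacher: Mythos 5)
Your proposal is correct and follows exactly the route the paper intends: the paper's own proof is the single remark that the result ``immediately follows from Proposition~\ref{prop:2errPi}'', and your argument simply makes explicit the three facts this relies on --- that $\Psi^0=s\psi^0\in S_{0h}$ so that by Proposition~\ref{Zlotnik_mini4:th:pk} and the uniqueness in Proposition~\ref{prop:2} the restriction $\Psi|_{\bar\Omega}$ is the solution of \eqref{eq:ii2}, \eqref{eq:icom}; that $\|w\|_{\mathcal H+\hat v\rho;\,\Omega}\leq\|w\|_{\mathcal H+\hat v\rho;\,\Pi}$ since the discarded integral over $\Pi\setminus\Omega$ has frozen coefficients and is nonnegative by $\hat v\geq -V_\infty/\rho_\infty$; and that the right-hand norms of $\mathcal H_\rho^k\psi^0$ over $\Pi$ and $\Omega$ coincide by locality of $\mathcal H$ and the support condition on $\psi^0$. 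No gaps; your identification of the norm domination as the only step where the unbounded geometry genuinely enters is accurate.
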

\par The result immediately follows from Proposition \ref{prop:2errPi}.
\par Note that the norm $\|\cdot\|_{\mathcal{H}+\hat{v}\rho;\,\Omega}$ is equivalent to $\|\cdot\|_{H^1(\Omega)}$ and $\|s\cdot\|_{H^1(\Omega)}$ is actually the mesh counterpart of the latter norm.
\medskip\par\noindent\textbf{Acknowledgments}.
The study is supported by The National Research University -- Higher School of Economics' Academic Fund Program in 2014-2015,
research grant No. 14-01-0014.

\end{document}